\documentclass[accepted]{uai2022} %

\usepackage[dvipsnames]{xcolor}

\usepackage[american]{babel}

\usepackage{natbib} %
    \bibliographystyle{plainnat}
    
\usepackage{mathtools} %
\usepackage{booktabs} %
\usepackage{tikz} %

\usepackage{amsmath,amsfonts,bm}

\def\eqref#1{Equation~(\ref{#1})}

\def\1{\bm{1}}

\def\vv{{\bm{v}}}

\DeclareMathAlphabet{\mathsfit}{\encodingdefault}{\sfdefault}{m}{sl}
\SetMathAlphabet{\mathsfit}{bold}{\encodingdefault}{\sfdefault}{bx}{n}

\newcommand{\E}{\mathbb{E}}

\DeclareMathOperator*{\argmax}{arg\,max}
\DeclareMathOperator*{\argmin}{arg\,min}

\usepackage{url}

\usepackage{booktabs}       %
\usepackage{amsfonts}       %
\usepackage{nicefrac}       %
\usepackage{microtype}      %

\usepackage{geometry}
\usepackage{amsmath}
\usepackage{amssymb}

\usepackage{graphicx}
\usepackage{subfigure}
\usepackage{multirow}
\usepackage{makecell}
\usepackage{wrapfig}

\usepackage{caption}
\usepackage{wrapfig}

\usepackage{algorithm,algpseudocode}
\usepackage{qiangstyle}

\title{Pareto Navigation Gradient Descent: a First-Order Algorithm for Optimization in Pareto Set}

\author[1]{\href{mailto:<maoye21@utexas.edu>?Subject=Your UAI 2022 paper}{Mao Ye}{}}
\author[1]{Qiang Liu}
\affil[1]{%
    Computer Science Dept.\\
    The University of Texas at Austin.
}

\begin{document}

\global\long\def\th{\theta}%
\global\long\def\E{\mathrm{E}}%
\global\long\def\L{\vv{\ell}}%
\global\long\def\dge{\succeq}%
\global\long\def\dle{\preceq}%
\global\long\def\dsqe{\succ}%
\global\long\def\dsle{\prec}%
\global\long\def\C{\mathcal{C}}%
\global\long\def\P{\mathcal{P}}%
\global\long\def\df{\mathrm{d}}
\global\long\def\tim{{t}}

\newtheorem{theorem}{Theorem}
\newtheorem{lemma}{Lemma}
\newtheorem{assumption}{Assumption}

\maketitle

\begin{abstract}
Many modern machine learning applications, such as multi-task learning, require finding optimal model parameters to trade-off multiple objective functions that may conflict with each other.
The notion of the Pareto set allows us to focus on the set of (often infinite number of)
models that cannot be strictly improved. But it does not provide an actionable procedure for picking one or a few special models to return to practical users. In this paper, we consider \emph{optimization in Pareto set (OPT-in-Pareto)}, the problem of finding Pareto models that optimize an extra reference criterion function within the Pareto set. This function can either encode a specific preference from the users, or represent a generic diversity measure for obtaining a set of diversified Pareto models that are representative of the whole Pareto set.
Unfortunately, despite being a highly useful framework, efficient algorithms for OPT-in-Pareto have been largely missing, especially for large-scale, non-convex, and non-linear objectives in deep learning. A naive approach is to apply Riemannian manifold gradient descent on the Pareto set, which yields a high computational cost due to the need for eigen-calculation of Hessian matrices. We propose a first-order algorithm that approximately solves OPT-in-Pareto using only gradient information, with both high practical efficiency and theoretically guaranteed convergence property. Empirically, we demonstrate that our method works efficiently for a variety of challenging multi-task-related problems.
\end{abstract}

\section{Introduction}\label{sec:intro}
Although machine learning tasks are traditionally framed as optimizing a single objective, many modern applications, especially in areas like multitask learning, require finding optimal model parameters to minimize multiple objectives (or tasks) simultaneously. As the different objective functions may inevitably conflict with each other, the notion of optimality in multi-objective optimization (MOO) needs to be characterized by the Pareto set: the set of model parameters whose performance of all tasks cannot be jointly improved.

Focusing on the Pareto set allows us to filter out models that can be strictly improved. However, the Pareto set typically contains an infinite number of parameters that represent different trade-offs of the objectives. 
For $m$ objectives $\ell_1,\ldots, \ell_m$, 
the Pareto set is often an $(m-1)$ dimensional manifold. 
It is both intractable and unnecessary to give practical users the whole exact Pareto set. A more practical demand is to find some user-specified special parameters in the Pareto set, which can be framed into the following \emph{optimization in Pareto set (OPT-in-Pareto)} problem: 

\emph{Finding one or a set of parameters inside the Pareto set of $\ell_1,\ldots, \ell_m$ that minimize a reference criterion $F$.}

Here the criterion function $F$ can be used to encode 
an \emph{informative} 
user-specific preference on the objectives $\ell_1,\ldots, \ell_m$, which allows us to provide the best models customized for different users. $F$ can also be an \emph{non-informative} measure 
that encourages, for example, the diversity of a set of model parameters. In this case, optimizing $F$ in Pareto set gives a set of diversified Pareto models that are representative of the whole Pareto set, from which different users can pick their favorite models during the testing time. 

OPT-in-Pareto provides a highly generic and actionable framework for multi-objective learning and optimization. However, efficient algorithms for solving OPT-in-Pareto have been largely lagging behind in deep learning where the objective functions are non-convex and non-linear. Although has not been formally studied, a straightforward approach is to apply manifold gradient descent on $F$ in the Riemannian manifold formed by the Pareto set \citep{hillermeier2001generalized, bonnabel2013stochastic}. However, this casts prohibitive computational cost due to the need for eigen-computation of Hessian matrices of $\{\ell_\i\}$. In the optimization and operation research literature, there has been a body of work on OPT-in-Pareto viewing it as a special bi-level optimization problem \citep{dempe2018bilevel}. However, these works often heavily rely on the linearity and convexity assumptions and are not applicable to the non-linear and non-convex problems in deep learning; see for examples in
\citet{ecker1994optimizing,jorge2005bilinear,thach2014problems,liu2018primal,sadeghi2021solving} (just to name a few). In comparison, the OPT-in-Pareto problem seems to be much less known and under-explored in the deep learning literature. The exceptions are three works \citep{mahapatra2020multi,kamani2021pareto,chen2021weighted} that propose specialized algorithms for some specific instantiations of the OPT-in-Pareto problem and we defer a more detailed review to Section \ref{sec: review}.

In this work, we provide a practically efficient first-order algorithm for OPT-in-Pareto, using only gradient information of the criterion $F$ and objectives $\{\ell_i\}$. Our method, named \emph{Pareto navigation gradient descent}  ({\PNG}), iteratively updates the parameters following a direction that carefully balances the descent on $F$ and $\{\ell_i\}$, such that it guarantees to move towards the Pareto set of $\{\ell_i\}$ when it is far away, and optimize $F$ in a neighborhood of the Pareto set. Our method is simple, practically efficient and has theoretical guarantees.

In empirical studies, 
we demonstrate that our method 
works efficiently for both optimizing user-specific 
criteria and diversity measures. %
In particular, for finding representative Pareto solutions, 
we propose an energy distance criterion
whose minimizers distribute uniformly 
on the Pareto set asymptotically 
\citep{hardin2004discretizing}, 
yielding a principled and efficient Pareto set approximation method that compares favorably with recent works such as
\citet{lin2019pareto,mahapatra2020multi}. %
We also apply {\PNG} to improve the performance of JiGen \citep{carlucci2019domain}, a multi-task learning approach for domain generalization, by using the adversarial feature discrepancy as the criterion objective.

\section{Background on Multi-objective Optimization} 
\label{sec: background}
We introduce the background on multi-objective optimization (MOO) and Pareto optimality. 
For notation, we denote by $[m]$ the integer set $\{1,2,....,m\}$, and 
$\RRplus$  the set of non-negative real numbers. 
Let $\C^m = \left\{\omega\in \RRplus^m,~~\sum_{\i=1}^m \omega_\i = 1\right\}$ be the probability simplex.
We denote by $\left\Vert \cdot\right\Vert$ the 
Euclidean norm. 

Let $\th \in \RR^\dimcc$ be a parameter of interest (e.g., the weights in a deep neural network). 
Let $\L(\cc)=[\ell_1(\cc),\ldots, \ell_m(\cc)]$ be a set of objective functions that we want to minimize. 
For two parameters $\th,\th'\in \RR^\dimcc$,
we write $\L(\cc) \succeq \L(\cc')$ if $\ell_\i(\cc) \geq \ell_\i(\cc')$ for all $\i \in [m]$; 
and write  $\L(\cc) \succ \L(\cc')$  if  
$\L(\cc) \succeq \L(\cc')$ and $\L(\cc) \neq \L(\cc')$. 
We say that $\th$  is Pareto dominated (or Pareto improved)  by $\th'$ if  $\L(\cc) \succ \L(\cc')$.  
We say that $\cc$ is Pareto optimal on a set $\Theta\subseteq \RR^\dimcc$, denoted as $\theta\in \mathrm{Pareto}(\Theta)$, if
there exists no $\theta' \in \Theta$ such that $\L(\th)\dsqe\L(\th')$. 

The Pareto global optimal set  $\P^{**} \defeq \mathrm{Pareto}(\RR^{\dimcc})$  
is the set of points (i.e., $\cc$) which are Pareto optimal on the whole domain $\RR^\dimcc$.  
The Pareto local optimal set %
of $\L$, denoted by $\P^{*}$, 
is the set of points which are Pareto optimal on a neighborhood of itself: 
\begin{align*}
\P^{*}:=\{\th\in\RR^\dimcc: ~~
& \text{there exists a neighborhood $\mathcal N_\cc$ of $\cc$, }
\\
& \text{such that $\cc\in \mathrm{Pareto}(\mathcal N_{\cc})$} \}. 
\end{align*}
The (local or global) Pareto front 
is the set of objective vectors 
achieved  by the Pareto optimal points, e.g., %
the local Pareto front is 
$\mathcal F^* = \{\L(\th):\th\in\P^*\}$. 
Because finding global Pareto optimum 
is intractable for non-convex objectives in deep learning, 
we focus on  Pareto local optimal sets in this work; 
in the rest of the paper, terms like ``Pareto set'' and ``Pareto optimum'' refer to Pareto local optimum by default. 

\paragraph{Pareto Stationary Points} 
Similar to the case of single-objective optimization, 
Pareto local optimum implies a notion of Pareto stationarity defined as follows. Assume $\L$ is differentiable on $\RR^\dimcc$. A point $\cc$ is called Pareto stationary if there must exists a set of non-negative weights  $\omega_1,\ldots, \omega_m$ with $\sum_{\i=1}^m \omega_\i = 1$, such that $\cc$ is a stationary point of the $\omega$-weighted linear combination of the objectives: $\ell_{\omega}(\cc)\defeq \sum_{\i=1}^m \omega_\i \ell_\i(\cc).$ 
Therefore, the set of Pareto stationary points, denoted by $\P$, 
can be characterized by
\begin{align}\label{equ: pareto stationary}
    \P&:=\left\{ \th\in\Theta:g(\th)=0\right\}
    \\
    \nonumber
    g(\th)&:=\min_{\omega\in\C^m}||\sum_{\i=1}^m \omega_\i\nabla\ell_\i(\th)||^2, 
\end{align}
where $g(\cc)$ is the minimum squared gradient norm of $\ell_{\omega}$ among all $\omega$ in the probability simplex $\C^m$ on $[m]$. 
Because $g(\th)$ can be calculated in practice, 
it provides an essential way to access Pareto local optimality. Being a Pareto stationary point is a necessary condition of being a Pareto local optimum.

\paragraph{Finding  Pareto Optimal Points}  
A main focus of the MOO literature is to find a (set of) Pareto optimal points. 
The simplest approach is \emph{linear scalarization}, 
which minimizes $\ell_{\omega}$ for some weight $\omega$ (decided, e.g., by the users) in $\C^m$. 
However, %
linear scalarization can only find Pareto points that lie  on 
the \emph{convex envelop} of the Pareto front   \citep[see e.g.,][]{boyd2004convex}, and hence does not give a complete profiling of the Pareto front when the objective functions (and hence their Pareto front) are non-convex.

\emph{Multiple gradient descent (MGD)} \citep{desideri2012multiple} %
is an gradient-based algorithm that can 
converge to a Pareto local optimum that lies on either the convex or non-convex parts of the Pareto front, depending on the initialization.  %
MGD starts from some initialization $\cc_0$ and updates $\cc$ at the $\k$-th iteration by 
\begin{align}
\label{equ: update mgd} 
\cc_{\k+1} & \gets \cc_\k - \xi v_\k,
\\
\nonumber
v_\k & \defeq 
\argmax_{v\in \RR^\dimcc}
\left\{ 
 \min_{\i\in[m]} \dd \ell_\i(\cc_\k)\tt v 
-\frac{1}{2}\norm{v} ^{2}
\right\},
\end{align}
where $\xi$ is the step size  and 
$v_\k$ is 
an 
update direction that maximizes 
the \emph{worst} descent rate among all objectives, since 
$%
 \nabla\ell_\i(\th_\k)\tt v  \approx 
 (\ell_\i(\theta_\k) - \ell_\i(\theta_\k-\xi v))/\xi$
 approximates the descent rate of objective $\ell_\i$ 
 when following direction $v$.  
When using a sufficiently small step size $\xi$, MGD ensures to yield a \emph{Pareto improvement} (i.e, decreasing all the objectives) on $\cc_\k$ unless $\cc_\k$ is Pareto (local) optimal; this is because the optimization in \eqref{equ: update mgd} always yields $\min_{\i\in[m]} \dd \ell_\i(\cc_\k)\tt v_\k\leq 0$ (otherwise we can simply flip the sign of $v_\k$). 

Using Lagrange strong duality, the solution of \eqref{equ: update mgd} can be framed into 
\begin{align} \label{equ:mgd_dual}
v_\k & = \sum_{\i=1}^m \omega_{\i,\k} \dd \ell_\i(\cc_\k),
\\
\nonumber
\text{where }
  \{\omega_{\i,\k}\}_{\i=1}^m
  & =\arg\min_{\omega\in\C^{m}}\norm{ 
  \dd_\cc \ell_{\omega} (\cc_t) }.
\end{align}
It is easy to see from \eqref{equ:mgd_dual} 
that the set of fixed points of MDG (which satisfy $v_\k=0$) 
coincides with the Pareto stationary set $\P^*$. 

A key disadvantage of MGD, however, 
is that the Pareto point 
that it converges to 
depends on the initialization and other algorithm configurations in a rather implicated and complicated way. 
It is difficult to explicitly control MGD to make it converge to points with specific properties.

\section{Optimization In Pareto Set} \label{sec: problem}

The Pareto set typically contains an infinite number of points. 
In the \emph{optimization in Pareto set} (OPT-in-Pareto) problem,
we are given an extra criterion function $F(\theta)$ in addition to the objectives $\L$, and 
we want to minimize $F$ in the Pareto set of $\L$, that is, 
\begin{eqnarray} \label{equ: main_problem}
\min_{\th\in \P^*}F(\th).
\end{eqnarray}
For example, 
one can find the Pareto point whose loss vector $\L(\cc)$ is the closest to a given reference point $r\in \RR^m$ by choosing 
$F(\th) = \norm{\L(\th) - r }^2$. We can also design $F$ to encourages $\L(\cc)$  to be proportional to $r$, i.e., $\L(\cc)\propto r$; a constraint variant of this problem was considered in 
\citet{mahapatra2020multi}. %

We can further generalize 
OPT-in-Pareto %
to allow the criterion $F$ to depend on an ensemble of Pareto points $\{\th_1, ...,\th_N\}$ jointly, that is, %
\begin{eqnarray} \label{equ: main_problem_multi}
\min_{\th_{1},...,\th_{N}\in \P^*}F(\th_{1},...,\th_{N}).
\end{eqnarray}
For example, if $F(\th_1,\ldots, \th_N)$ measures the diversity among $\{\th_i\}_{i=1}^N$, then optimizing it provides a set of diversified  points inside the Pareto set $\P^*$ yielding a good approximation of $\P^*$.
An example of diversity measure is 
\begin{align} \label{eqn: energy}
F(\theta_1, \ldots, \theta_N ) & =E(\L(\cc_1), \ldots, \L(\cc_N)
),
\\
\nonumber
\text{with } E(\L_1, \ldots, \L_N)
& =\sum_{i\neq j}
\left\Vert \L_i-\L_j\right\Vert ^{-2},
\end{align}
where $E$ is known as an \emph{energy distance} in 
computational geometry, 
whose minimizer can be shown to give 
an uniform distribution on manifold asymptotically when $N\to\infty$ 
\citep{hardin2004discretizing}. 
This formulation is particularly useful when the users' preference is unknown during the training time, and we want to return an ensemble of models that well cover the different areas of the Pareto set to allow the users to pick up a model that fits their needs regardless of their preference. 
The problem of profiling Pareto set has attracted 
 a line of recent works  
\citep[e.g.,][]{lin2019pareto,mahapatra2020multi,ma2020efficient,deist2021multi}, but they
rely on specific criterion or heuristics and do not address the general optimization of form \eqref{equ: main_problem_multi}.  

\paragraph{Manifold Gradient Descent} 
One straightforward approach to 
OPT-in-Pareto is to deploy manifold gradient descent \citep{hillermeier2001generalized,bonnabel2013stochastic}, 
which conducts steepest descent of $F(\cc)$ 
in the Riemannian manifold formed by the Pareto set $\P^*$. Initialized at $\th_0\in \P^*$, manifold gradient descent updates $\th_{\k}$ at the $\k$-th iteration along the direction of the projection of $\nabla F(\th_\k)$ on the tangent space $\mathcal T(\th_\k)$ at $\th_\k$ in $\P^*$, %
\[
\th_{\k+1}=\th_\k-\xi\text{Proj}_{\mathcal T(\th_\k)}(\nabla F(\th_\k)).
\]
By using the stationarity characterization in \eqref{equ: pareto stationary}, under proper regularity conditions, 
one can show that the tangent space $\mathcal T(\th_\k)$ equals the null space of the Hessian matrix $\dd^2_{\cc} \ell_{\omega_\k}(\cc_\k)$, where $\omega_\k = \argmin_{\omega\in\C^m}\norm{\dd_{\cc}\ell_{\omega}(\cc_\k)}$. However, the key issue of manifold gradient descent is the high cost for calculating this null space of Hessian matrix. 
Although numerical techniques such as Krylov subspace iteration \citep{ma2020efficient} or conjugate gradient descent \citep{koh2017understanding} can be applied, 
the high computational cost (and the complicated implementation) still impedes its application in large scale deep learning problems.
See Section~\ref{sec:intro} for discussions on other related works. %

\section{Pareto Navigation Gradient Descent for OPT-in-Pareto} \label{sec: algo}
We now introduce our main algorithm, Pareto Navigating Gradient Descent (\PNG), which provides a practical approach to OPT-in-Pareto. 
For convenience, we focus on the single point problem in \eqref{equ: main_problem} in the presentation. 
The generalization to the multi-point problem in \eqref{equ: main_problem_multi} is straightforward.  
We first introduce the main idea and then present theoretical analysis in Section~\ref{sec: theory}. 

We consider the general incremental updating rule of form 
$$
\cc_{\k+1} \gets \cc_\k - \xi v_\k,
$$
where $\xi$ is the step size 
and $v_\k$ is an update direction that we shall choose to 
achieve the following desiderata in balancing the decent of $\{\ell_\i\}$ and $F$: 

i) When $\cc_\k$ is far away from the Pareto set, we want to choose $v_\k$ to give Pareto improvement to $\cc_\k$, moving it towards the Pareto set. The amount of Pareto improvement might depend on how far $\cc_\k$ is to the Pareto set.

ii) If the directions that yield Pareto improvement are not unique, we want to choose the Pareto improvement direction that decreases $F(\cc)$ most. 

iii) When $\cc_\k$ is very close to the Pareto set,  
e.g., having a small  $g(\cc)$, 
we want to fully optimize $F(\cc)$.

We achieve the desiderata above by using the $v_\k$ that solves the following optimization: 
\begin{align} \label{opt: relax}
& v_\k =\argmin_{v \in \mathbb{R}^n}\left \{  \frac12\left\Vert \nabla F(\th_t)-v \right\Vert ^{2} \right\}
\\
\nonumber
&\text{s.t.  }
\nabla_{\th}\ell_\i(\th_t) \tt v %
\ge \phi_\k,  ~~~~\forall \i\in [m], 
\end{align}
where we want $v_\k$ to be as close to $\nabla F(\th_t)$ as possible (hence decrease $F$ most), conditional on that the decreasing rate $\nabla_{\th}\ell_\i(\th_t) \tt v_t $  of all losses $\ell_\i$ 
are lower bounded by a \emph{control parameter} $\phi_\k$. 
A positive $\phi_\k$ enforces that $\nabla_{\th_t}\ell_\i(\th) \tt v_\k$ is positive for all $\ell_\i$, 
hence ensuring a Pareto  improvement when the step size is sufficiently small. The magnitude of $\phi_\k$ controls 
how much Pareto improvement we want to enforce, so we may want to gradually decrease $\phi_\k$ when we move closer to the Pareto set. 
In fact, varying $\phi_\k$ provides an intermediate updating direction between the vanilla gradient descent on $F$ and MGD on $\{\ell_\i\}$: 

i) If $\phi_\k = -\infty$, we have $v_\k = \dd F(\cc_\k)$ and it conducts a pure gradient descent on $F$ without considering $\{\ell_\i\}$.

ii) If $\phi_\k \to +\infty$, 
 then 
 $v_\k$ approaches to the MGD direction of 
 $\{\ell_i\}$ in \eqref{equ: update mgd} 
 without considering $F$. %

In this work, we propose to choose $\phi_\k$ based on the minimum gradient norm $g(\cc_\k)$ in \eqref{equ: pareto stationary} as a surrogate indication of Pareto local optimality. In particular, we consider the following simple design: 
\begin{align}\label{equ:phi}
\phi_\k = \begin{cases} 
-\infty & \text{if $g(\cc_\k) \leq \ep$},  \\
\alpha_\k g(\th_\k) & \text{if $g(\cc_\k) >  \ep$},
\end{cases} 
\end{align}
where $\ep$ is a small tolerance parameter 
and $\alpha_\k$ is a positive hyper-parameter.  
When $g(\cc_\k) > \ep$, 
we set $\phi_\k$ to be proportional to $g(\cc_\k)$, to ensure Pareto improvement based on how far $\cc_\k$ is to Pareto set. 
When $g(\cc_\k) \leq \ep$, 
we set $\phi_\k =-\infty$ which ``turns off'' the control and hence fully optimizes $F(\cc)$. %

\begin{algorithm*}[t]
\begin{algorithmic}[1]
\State{Initialize $\theta_0$; decide the
    step size $\xi$, and the control function $\phi$ in \eqref{equ:phi} (including the threshold $\ep >0$ and the descending rate $\{\alpha_\k\}$).}
\For{iteration $\k$}
    \vspace{-0.4cm}
    \bbb\label{equ:vk00}   \theta_{\k+1} \gets \theta_\k - \xi v_t, && 
 v_t = \nabla F(\theta_\k) + \textstyle{\sum}_{i=1}^m \lambda_{\i,\k} \nabla \ell_\i(\theta_\k) ,
    \vspace{-0.2cm}
    \eee 
    where $\lambda_{\i,\k} =0,~\forall \i\in[m]$ if $g(\th_\k) \leq \ep$, and $\{\lambda_{\i,\k}\}_{t=1}^m$ is the solution of (\ref{equ: dual}) with $\phi(\th_\k)=\alpha_\k g(\th_\k)$ when  $g(\th_\k) > \ep$.
\EndFor
\end{algorithmic}\caption{Pareto Navigating Gradient Descent}\label{alg:main}
\end{algorithm*}

In practice, the optimization in \eqref{opt: relax} can be  solved efficiently by  its dual form as follows. %
\begin{theorem} \label{thm: sol}
The solution $v_t$ of \eqref{opt: relax},  
if it exists, 
has a form of 
\bbb \label{equ:vk0}
v_\k = \dd F(\cc_\k) + \sum_{t=1}^m \lambda_{\i,\k} \dd \ell_\i(\cc_\k),
\eee 
with $\{\lambda_{\i,\k}\}_{t=1}^m$ %
the solution of the following dual problem 
\bbb \label{equ: dual}
\max_{\lambda\in\RRplus^m}-\frac{1}{2}|| \nabla F(\cc_\k)+\sum_{\i=1}^{m}\lambda_{\k}\nabla\ell_\i(\th_\k)|| ^{2}+\sum_{\i=1}^{m}\lambda_\i\phi_\k. 
\eee 
\end{theorem}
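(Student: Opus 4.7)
The plan is to treat the problem in \eqref{opt: relax} as a strictly convex quadratic program in the variable $v$ with linear inequality constraints, and apply Lagrangian duality. Since the objective $\tfrac12\|\nabla F(\theta_t)-v\|^2$ is strongly convex in $v$ and all constraints are affine, strong duality for convex QPs applies whenever the primal has a solution; so the duality gap is zero and we may recover the primal optimum from any dual optimum via the Lagrangian stationarity conditions.

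First I would introduce multipliers $\lambda_i\ge 0$ for the $m$ linear constraints $\nabla_\theta\ell_i(\theta_t)^\top v\ge\phi_k$ and form
\begin{equation*}
L(v,\lambda) \;=\; \tfrac12\|\nabla F(\theta_t)-v\|^2 \;-\; \sum_{i=1}^{m}\lambda_i\bigl(\nabla\ell_i(\theta_t)^\top v - \phi_k\bigr).
\end{equation*}
Setting $\nabla_v L = 0$ gives $-(\nabla F(\theta_t)-v) - \sum_i \lambda_i\nabla\ell_i(\theta_t) = 0$, hence
\begin{equation*}
v \;=\; \nabla F(\theta_t) + \sum_{i=1}^{m}\lambda_i\,\nabla\ell_i(\theta_t),
\end{equation*}
which is precisely the claimed form \eqref{equ:vk0}. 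By KKT, if $v_k$ is the primal optimum then there exist $\lambda_{i,k}\ge 0$ (the dual optimum) realizing this identity.

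Next I would substitute $v$ back into $L$ to obtain the dual function $g(\lambda)=\min_v L(v,\lambda)$. A short computation gives $\nabla F(\theta_t)-v = -\sum_i\lambda_i\nabla\ell_i(\theta_t)$, so $\tfrac12\|\nabla F(\theta_t)-v\|^2 = \tfrac12\|\sum_i\lambda_i\nabla\ell_i(\theta_t)\|^2$, and expanding $\sum_i\lambda_i\nabla\ell_i(\theta_t)^\top v$ using the same expression for $v$ leads, after collecting terms, to
\begin{equation*}
g(\lambda) \;=\; -\tfrac12\bigl\|\nabla F(\theta_t) + \textstyle\sum_{i=1}^{m}\lambda_i\nabla\ell_i(\theta_t)\bigr\|^2 + \sum_{i=1}^{m}\lambda_i\phi_k \;+\; \tfrac12\|\nabla F(\theta_t)\|^2.
\end{equation*}
The additive constant $\tfrac12\|\nabla F(\theta_t)\|^2$ is independent of $\lambda$ and therefore does not affect the maximizer. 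Maximizing $g$ over $\lambda\in\RR_+^m$ yields \eqref{equ: dual}, and strong duality then guarantees that the $\lambda_{i,k}$ so obtained are exactly the multipliers appearing in the primal representation of $v_k$.

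The only step requiring any real care is justifying strong duality and the exchange of min/max. This follows from standard convex QP theory: the primal is a convex quadratic with affine constraints, the objective is bounded below by $0$, and as soon as the feasible set is nonempty (which is exactly the hypothesis that the primal solution exists), both primal and dual attain their optima with zero duality gap (no Slater-type regularity condition is needed for QPs with affine constraints). I would cite this fact rather than reprove it. With that in hand, the KKT conditions derived above are necessary and sufficient, completing the argument.
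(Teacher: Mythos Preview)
Your proposal is correct and follows essentially the same route as the paper: form the Lagrangian, invoke strong duality for convex QPs with affine constraints to swap the min and max, solve the inner minimization in $v$ to obtain $v=\nabla F(\theta_t)+\sum_i\lambda_i\nabla\ell_i(\theta_t)$, and substitute back to get the dual \eqref{equ: dual}. Your explicit remark that the additive constant $\tfrac12\|\nabla F(\theta_t)\|^2$ drops out of the argmax is a nice touch the paper leaves implicit, but otherwise the arguments coincide.
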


The optimization in \eqref{equ: dual} can be solved efficiently for a small $m$ (e..g, $m\leq 10$), which is the case for typical applications. 
We include the details of the practical implementation in Algorithm~\ref{alg:main}.

\section{Theoretical Properties} \label{sec: theory}

We provide a theoretical quantification on how {\PNG} 
guarantees to i) move the solution towards the Pareto set (Theorem~\ref{thm:odeell}); 
and ii) optimize $F$ in a neighborhood of Pareto set (Theorem~\ref{thm:odef}). 
To simplify the result and highlight the intuition, we focus on the continuous time limit of {\PNG},  
which yields a differentiation equation $\df  \cc_\tim = - v_\tim \df \tim$ with $v_\tim$ defined in \eqref{opt: relax}, 
where $t\in\RRplus$ is a continuous integration time. 

\begin{assumption}\label{asm:basic}
Let $\{\cc_t\colon t\in\RRplus\}$ be a solution of $\df \cc_t = -v_t \df t$ with $v_t$ in \eqref{opt: relax}; $\phi_k$ in \eqref{equ:phi};  $\ep>0$; and $\alpha_t \geq 0$,$\forall t\in \RRplus$. %
Assume $F$ and $\L$ are continuously differentiable on 
$\RR^\dimcc$, and lower bounded with 
$F\true\defeq \inf_{\cc\in \RR^\dimcc}F(\cc) > -\infty$ and $
\ell_\i\true \defeq \inf_{\cc\in \RR^\dimcc}\ell_\i(\cc)  > -\infty$. 
Assume $\sup_{\cc\in\RR^\dimcc}\norm{\dd F(\cc)}\leq c$. 
\end{assumption}

Technically, 
$\df \cc_t = -v_t \df t$ is a piecewise smooth dynamical  system whose solution should be taken in the Filippov sense using the notion of {differential inclusion} \citep{bernardo2008piecewise}.   
The solution always exists %
under mild regularity conditions 
although it may not be unique. 
Our results below apply to all  solutions.

\subsection{Pareto Optimization} 
We 
now show that the algorithm converges to the vicinity of Pareto set 
quantified by a notion of Pareto closure. 
For $\epsilon\geq 0$, 
let $\P_\epsilon$   
be the set of Pareto $\epsilon$-stationary points: 
$\P_{\epsilon} = \{\cc\in \RR^\dimcc \colon ~ g(\cc) \leq \epsilon\}$. %
The Pareto closure of a set $\P_{\epsilon}$, denoted by $\overline\P_\epsilon$ is the set of points 
that perform no worse than at least one point in $\P_{\epsilon}$, that is, %
\begin{align*}
\overline{\P}_{\epsilon}:= \cup_{\cc\in \P_{\epsilon}} \overline{\{\cc\}}, && 
\overline{\{\cc\}} = \{\cc'\in \RR^\dimcc\colon ~~ \L(\cc') \preceq \L(\cc)\}.
\end{align*}
Therefore, 
$\overline\P_\epsilon$ is better than  or at least as good as $\P_\epsilon$ 
in terms of Pareto efficiency.%

\begin{theorem}[Pareto Improvement on $\L$]  \label{thm:odeell}
 Under Assumption~\ref{asm:basic}, 
assume $\cc_0\not\in\P_\ep$, and $t_\ep$ is the first time when $\cc_{t_\ep} \in \P_\ep$, then for any time $t < t_\ep,$ 
\bb 
\frac{\df}{\df t} \ell_\i( \cc_t) \leq -\alpha_t g(\cc_t),\ 
\min_{s\in[0,t]} g(\cc_{\color{black}s}) \leq \frac{\min_{\i\in[m]}(\ell_\i(\cc_0)-\ell_i\true)}{\int_0^t \alpha_s \df s}.
\ee 
Therefore,  the update yields Pareto improvement on $\L$ when $\cc_t \not\in \P_\ep$ and  $\alpha_t g(\cc_t)>0$.

Further, 
if $\int_0^t \alpha_s \df s = +\infty$, 
then for any $\epsilon>\ep$,  there exists a finite time $t_\epsilon \in \RRplus$ on which the solution enters $\P_{\epsilon}$ and stays within $\overline \P_\epsilon$ afterwards, that is, we have $\cc_{t_\epsilon} \in \P_\epsilon$ and $\cc_t\in \overline \P_\epsilon$ 
for any $t \geq t_\epsilon$. 
\end{theorem}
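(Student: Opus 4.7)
The plan is to translate the linear constraint in \eqref{opt: relax} into a direct differential inequality on each $\ell_\i(\cc_t)$ via the chain rule, integrate it to obtain the min-bound on $g$, and then use a phase-tracking argument for the final stay-in-$\overline\P_\epsilon$ claim.

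First, when $\cc_t \notin \P_\ep$, rule~\eqref{equ:phi} sets $\phi_t = \alpha_t g(\cc_t)$, and the active constraint in \eqref{opt: relax} gives $\nabla\ell_\i(\cc_t)^\top v_t \geq \alpha_t g(\cc_t)$ for every $\i \in [m]$. Since $\tfrac{\df}{\df t}\ell_\i(\cc_t) = -\nabla\ell_\i(\cc_t)^\top v_t$ along the ODE, this yields the displayed inequality, and in particular strict descent of every $\ell_\i$ whenever $\alpha_t g(\cc_t) > 0$. Integrating on $[0, t]$ with $t < t_\ep$ and using the lower bound $\ell_\i(\cc_t) \geq \ell_\i^*$ from Assumption~\ref{asm:basic} gives $\int_0^t \alpha_s g(\cc_s)\,\df s \leq \ell_\i(\cc_0) - \ell_\i^*$ for each $\i$; minimizing over $\i$ on the right and bounding the left below by $\bigl(\min_{s\in[0,t]} g(\cc_s)\bigr)\int_0^t \alpha_s\,\df s$ (which uses $\alpha_s \geq 0$) recovers the stated min-bound.

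For the second half, assume $\int_0^\infty \alpha_s\,\df s = +\infty$. The min-bound then forces $\inf_{s\geq 0} g(\cc_s) = 0$, so by continuity of $g$ on $\RR^\dimcc$ (a minimum of squared norms over the compact simplex $\C^m$) together with continuity of the trajectory, there is a finite $t_\epsilon$ with $\cc_{t_\epsilon}\in\P_\epsilon$ for any $\epsilon > \ep$. To show $\cc_t\in\overline\P_\epsilon$ for all $t \geq t_\epsilon$, fix such $t$ and let $S := \{r \in [t_\epsilon, t] \colon g(\cc_r) \leq \ep\}$. If $S = \emptyset$ the constraint regime is active on all of $[t_\epsilon, t]$, so the first-paragraph inequality gives $\L(\cc_t) \preceq \L(\cc_{t_\epsilon})$, hence $\cc_t \in \overline{\{\cc_{t_\epsilon}\}} \subseteq \overline\P_\epsilon$. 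Otherwise $S$ is closed in $[t_\epsilon, t]$ by continuity of $g$, and its supremum $s$ lies in $S$ with $\cc_s \in \P_\ep \subseteq \P_\epsilon$: when $s = t$ we get $\cc_t \in \P_\ep \subseteq \overline\P_\epsilon$ directly, while when $s < t$ the constraint regime is active on $(s, t]$ and yields $\L(\cc_t) \preceq \L(\cc_s)$, so $\cc_t \in \overline\P_\epsilon$ again.

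The main obstacle I anticipate is not any single algebraic step but the piecewise-smooth nature of the dynamics across the switching surface $\{g = \ep\}$: because $\phi_t$ jumps to $-\infty$ there, $v_t$ is a set-valued Filippov solution and the differential inequality on $\ell_\i$ must be interpreted in the almost-everywhere sense on the open set $\{g > \ep\}$. The phase-tracking argument only needs continuity of $\cc_t$ and of $g$, plus monotonicity of $\ell_\i$ on $\{g > \ep\}$; these hold under Assumption~\ref{asm:basic} together with standard results on differential inclusions, but care is needed so that possible sliding at $\{g = \ep\}$ does not break the dominance chain used in the final case analysis.
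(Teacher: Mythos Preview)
Your proof is correct and follows the same approach as the paper: derive the differential inequality from the constraint in \eqref{opt: relax}, integrate it to obtain the min-bound on $g$, and use a last-exit-from-$\P_\ep$ argument for the stay-in-$\overline\P_\epsilon$ claim. The paper phrases the final step by contradiction (observing that $\P_\ep$ lies in the interior of $\P_\epsilon\subseteq\overline\P_\epsilon$, so any escape must first pass through a point $t''$ with $\cc_{t''}\in\overline\P_\epsilon\setminus\P_\ep$, after which the monotonicity of $\L$ gives $\cc_{t'}\in\overline{\{\cc_{t''}\}}\subseteq\overline\P_\epsilon$) rather than via your direct $\sup S$ case analysis, but the underlying mechanism is identical.
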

Here we guarantee that $\cc_t$ must enter
 $\P_\epsilon$ for some time (in fact infinitely often), 
but it is not confined in $\P_\epsilon$. %
On the other hand, 
$\cc_t$ does not leave $\overline \P_\epsilon$ after it first enters $\P_\epsilon$ thanks to the Pareto improvement property. 

\subsection{Criterion Optimization}
We now show that {\PNG} finds a local optimum of $F$ inside the Pareto closure $\overline \P_{\epsilon}$ in an approximate sense. 
We first show 
that a fixed point $\cc$ of the algorithm that is locally convex on $F$ and $\L$ must be a local optimum of $F$ in the Pareto closure of $\{\cc\}$, and then quantify the convergence of the algorithm. %
\begin{theorem}[PNG Finds Local Optimum]\label{lem:dfjijgfgfgfgifjg} 
Under Assumption~\ref{asm:basic}, we have

If $\cc_t \not\in \P_\ep$  
is a fixed point of the algorithm, that is, $\frac{\df \theta_t}{\df t} = -v_t = 0$, and $F$, $\L$ are  convex in a neighborhood $\cc_t$, then 
$\cc_t$ is a local minimum of $F$
in the Pareto closure $\overline{\{\theta_t\}}$, %
that is,  there exists a neighborhood of $\cc_t$ in which  there exists no point $\cc'$ such that $F(\cc') < F(\cc_t)$ and $\L(\cc') \preceq \L(\cc_t)$. 

If $\cc_t\in \P_\ep$, we have 
$v_t = \dd F(\cc_t)$, 
and hence a fixed point with $\frac{\df \theta_t}{\df t} = -v_t = 0$ is an unconstrained local minimum of $F$ when $F$ is locally convex on $\cc_t$. 
\end{theorem}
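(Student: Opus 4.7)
The plan is to interpret the fixed-point condition $v_t=0$ as the KKT stationarity relation of the ``shadow'' constrained program $\min_\theta F(\theta)$ subject to $\L(\theta)\dle\L(\theta_t)$, whose feasible set is by definition the Pareto closure $\overline{\{\theta_t\}}$, and then use local convexity to upgrade the KKT system into a genuine local-optimality statement.

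First I would dispose of the easy case $\theta_t\in\P_\ep$. In this regime the control rule \eqref{equ:phi} sets $\phi_t=-\infty$ and the algorithm forces every multiplier $\lambda_{i,t}$ to vanish, so \eqref{equ:vk0} collapses to $v_t=\nabla F(\theta_t)$. The hypothesis $v_t=0$ then reads $\nabla F(\theta_t)=0$, and local convexity of $F$ at $\theta_t$ immediately delivers the unconstrained local-minimum claim.

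For the main case $\theta_t\notin\P_\ep$, I would appeal to Theorem~\ref{thm: sol}: the solution $v_t$ of \eqref{opt: relax} admits the representation in \eqref{equ:vk0}, and the dual problem \eqref{equ: dual} is constrained to $\lambda\ge 0$, yielding $\lambda_{i,t}\ge 0$ for every $i$. Substituting the fixed-point condition $v_t=0$ into \eqref{equ:vk0} gives
\[ \nabla F(\theta_t)+\sum_{i=1}^m \lambda_{i,t}\,\nabla\ell_i(\theta_t)=0, \qquad \lambda_{i,t}\ge 0,\]
which is exactly the KKT stationarity and dual-feasibility condition for the shadow program $\min_\theta F(\theta)$ subject to $\ell_i(\theta)\le\ell_i(\theta_t)$ for all $i$. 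Primal feasibility of $\theta_t$ is trivial, and complementary slackness is automatic since every constraint is active at $\theta_t$.

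The final step is to convert these KKT conditions into genuine local optimality using convexity. Let $\mathcal N$ be a neighborhood of $\theta_t$ on which $F$ and each $\ell_i$ are convex, and define the Lagrangian $L(\theta):=F(\theta)+\sum_i \lambda_{i,t}\ell_i(\theta)$. As a nonnegative combination of convex functions, $L$ is convex on $\mathcal N$, so $\nabla L(\theta_t)=0$ forces $L(\theta')\ge L(\theta_t)$ for every $\theta'\in\mathcal N$; restricting to $\theta'\in\mathcal N$ with $\L(\theta')\dle\L(\theta_t)$ makes $\sum_i \lambda_{i,t}(\ell_i(\theta')-\ell_i(\theta_t))\le 0$, and rearranging $L(\theta')\ge L(\theta_t)$ then forces $F(\theta')\ge F(\theta_t)$, which is precisely the claimed local optimality in $\overline{\{\theta_t\}}$. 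The step that requires the most care, though still essentially routine, is the transfer from \eqref{opt: relax} to the shadow program: it goes through because the two problems share the same constraint gradients $\nabla\ell_i(\theta_t)$ and the same sign requirement on the multipliers, so the $\lambda_{i,t}$ produced by the algorithm can be recycled as KKT multipliers for the shadow program with no extra work.
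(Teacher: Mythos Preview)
Your proposal is correct and follows essentially the same approach as the paper: recast the Pareto closure $\overline{\{\theta_t\}}$ as the feasible set of the constrained program $\min_\theta F(\theta)$ subject to $\ell_i(\theta)\le\ell_i(\theta_t)$, observe that $v_t=0$ together with $\lambda_{i,t}\ge 0$ furnishes exactly the KKT conditions at $\theta_t$, and invoke local convexity to upgrade KKT to local optimality. Your write-up is in fact more complete than the paper's, since you spell out the Lagrangian argument that turns KKT into optimality and you handle the $\theta_t\in\P_\ep$ case explicitly, whereas the paper's appendix proof only treats the first case and leaves the convexity step as a one-line remark.
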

\begin{theorem}[Convergence] \label{thm:odef}
Let  $\epsilon > \ep$ and assume $g_{\epsilon} \defeq \sup_{\cc} \{g(\cc) \colon ~\cc\in \overline \P_\epsilon\}<+\infty$ and  $\sup_{t\geq0}\alpha_t<\infty$.   
Under Assumption~\ref{asm:basic}, 
when we initialize from $\cc_0 \in \P_\epsilon$, %
we have 
$$
\min_{s\in[0,t]}\norm{ \frac{\df\cc_s}{\df s} }^2 \leq 
\frac{F(\cc_0)-F\true}{t} + \frac{1}{t}\int_{0}^t 
\alpha_s \left (\alpha_s {g_\epsilon}    + 
c \sqrt{g_\epsilon}  \right ) 
\df s. 
$$
In particular, 
if we have $\alpha_t = \alpha = const$, then 
$\min_{s\in[0,t]}\norm{\df \theta_s/\df s}^2  = \bigO\left (1/t + \alpha \sqrt{g_\epsilon} \right ).$

If  
$%
\int_0^\infty\alpha_t^\gamma\df t <+\infty$ for some $\gamma \geq 1$, we have 
$\min_{s\in[0,t]}\norm{\df \theta_s/\df s}^2  = \bigO(1/t + \sqrt{g_\epsilon}/t^{1/\gamma}).$
\end{theorem}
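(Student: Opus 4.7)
The plan is to track how $F(\cc_t)$ evolves along the continuous-time trajectory, bound the drift introduced by the Pareto-navigation constraints, and then integrate and apply Gr\"onwall-type arguments. \emph{Step 1 (key identity).} Differentiating along the flow, $\frac{\df F(\cc_t)}{\df t} = -\nabla F(\cc_t)^\top v_t$. Using the KKT representation from Theorem~\ref{thm: sol}, $v_t = \nabla F(\cc_t) + \sum_i \lambda_{i,t}\,\nabla \ell_i(\cc_t)$ with $\lambda_{i,t}\ge 0$, together with complementary slackness $\lambda_{i,t}\bigl(\nabla \ell_i(\cc_t)^\top v_t - \phi_t\bigr) = 0$, I get $\|v_t\|^2 = v_t^\top \nabla F(\cc_t) + \phi_t \sum_i \lambda_{i,t}$, hence
$$\frac{\df F(\cc_t)}{\df t} \;=\; -\|v_t\|^2 + \phi_t \sum_i \lambda_{i,t}.$$
This identity is uniformly valid: when $\cc_t \in \P_\ep$ the multipliers vanish and only the dissipation $-\|v_t\|^2$ survives; otherwise the drift $\phi_t \sum_i \lambda_{i,t}\ge 0$ is what must be controlled.

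\emph{Step 2 (bounding the drift through $g$).} The core geometric observation is that $\omega_i := \lambda_{i,t}/\sum_j \lambda_{j,t}$ lies in the simplex $\C^m$ whenever the multipliers do not all vanish, so the very definition of $g$ in \eqref{equ: pareto stationary} forces $\|\sum_i \omega_i \nabla \ell_i(\cc_t)\|^2 \ge g(\cc_t)$. Combining with the primal identity $\sum_i \lambda_{i,t}\nabla \ell_i(\cc_t) = v_t - \nabla F(\cc_t)$ yields
$$\sum_i \lambda_{i,t} \;\le\; \frac{\|v_t - \nabla F(\cc_t)\|}{\sqrt{g(\cc_t)}},$$
so after substituting $\phi_t = \alpha_t g(\cc_t)$ I obtain $\phi_t \sum_i \lambda_{i,t} \le \alpha_t \sqrt{g(\cc_t)}\,\|v_t - \nabla F(\cc_t)\|$. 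Since $\cc_0 \in \P_\epsilon$, the closure-invariance clause of Theorem~\ref{thm:odeell} (applied with $t_\epsilon = 0$) ensures $\cc_t \in \overline{\P}_\epsilon$ for all $t\ge 0$, hence $g(\cc_t)\le g_\epsilon$. Combining this with $\|v_t - \nabla F(\cc_t)\|\le \|v_t\| + c$ (from $\|\nabla F\|\le c$ in Assumption~\ref{asm:basic}) and Young's inequality $\alpha_t\sqrt{g_\epsilon}\|v_t\|\le \tfrac12 \|v_t\|^2 + \tfrac12 \alpha_t^2 g_\epsilon$, the ODE is controlled by
$$\frac{\df F(\cc_t)}{\df t} \;\le\; -\tfrac12\|v_t\|^2 + \tfrac12 \alpha_t^2 g_\epsilon + c\alpha_t\sqrt{g_\epsilon}.$$

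\emph{Step 3 (integration and corollaries).} Integrating from $0$ to $t$, using $F(\cc_t)\ge F\true$, and dividing by $t$ gives
$$\min_{s\in[0,t]}\|v_s\|^2 \;\le\; \frac{2(F(\cc_0)-F\true)}{t} + \frac{1}{t}\int_0^t \!\bigl(\alpha_s^2 g_\epsilon + 2c\alpha_s\sqrt{g_\epsilon}\bigr)\,\df s,$$
which matches the claimed bound (the factor of $2$ is absorbed into the $O(\cdot)$ in the corollaries). For $\alpha_t \equiv \alpha$, the integrand equals $\alpha^2 g_\epsilon + 2c\alpha\sqrt{g_\epsilon} = O(\alpha\sqrt{g_\epsilon})$ when $g_\epsilon$ is small, yielding $O(1/t + \alpha\sqrt{g_\epsilon})$. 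For $\int_0^\infty \alpha_t^\gamma\,\df t < \infty$ with $\gamma\ge 1$, H\"older's inequality gives $\int_0^t \alpha_s\,\df s \le \bigl(\int_0^t \alpha_s^\gamma\,\df s\bigr)^{1/\gamma} t^{1-1/\gamma} = O(t^{1-1/\gamma})$, and $\sup_s \alpha_s < \infty$ gives $\int_0^t \alpha_s^2\,\df s \le (\sup\alpha)\int_0^t \alpha_s\,\df s = O(t^{1-1/\gamma})$, so after dividing by $t$ the bound becomes $O(1/t + \sqrt{g_\epsilon}/t^{1/\gamma})$.

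\emph{Main obstacle.} The pivotal technical step is the geometric bound in Step 2: the recognition that the renormalized dual multipliers form a valid point of $\C^m$ and therefore inherit the explicit lower bound $g(\cc_t)$ on the squared gradient norm; without this, the drift $\phi_t\sum_i \lambda_{i,t}$ cannot be tied back to $g_\epsilon$. A secondary subtlety is that $v_t$ is piecewise defined (the switch at $g(\cc_t) = \ep$) and the trajectory should be interpreted in the Filippov sense; however, the identity $\frac{\df F}{\df t} = -\|v_t\|^2 + \phi_t \sum_i \lambda_{i,t}$ holds for every measurable selection of the set-valued vector field, so the integration proceeds almost everywhere and the bound is unaffected.
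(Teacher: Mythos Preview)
Your argument follows the same skeleton as the paper's: the KKT/complementary-slackness identity $\frac{\df F}{\df t}=-\|v_t\|^2+\phi_t\sum_i\lambda_{i,t}$, the geometric observation that the normalized multipliers lie in $\C^m$ so that $\|\sum_i\lambda_{i,t}\nabla\ell_i\|\ge(\sum_i\lambda_{i,t})\sqrt{g(\cc_t)}$, the invariance $\cc_t\in\overline{\P}_\epsilon$ from Theorem~\ref{thm:odeell}, and then integration plus H\"older for the corollaries. The one substantive difference is how you close the drift bound. You bound $\sum_i\lambda_{i,t}\le\|v_t-\nabla F\|/\sqrt{g}$ directly and then use Young's inequality to absorb the resulting $\alpha_t\sqrt{g_\epsilon}\,\|v_t\|$ term into $-\|v_t\|^2$; this costs a factor of $2$, so you establish the $O(\cdot)$ corollaries but not the displayed inequality with its stated constants. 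The paper instead combines complementary slackness with the same simplex bound to derive the pair of relations $|x-\bar\lambda\,\phi_t|\le c\sqrt{x}$ and $x\ge\bar\lambda^2 g$ (where $x=\|\sum_i\lambda_{i,t}\nabla\ell_i\|^2$, $\bar\lambda=\sum_i\lambda_{i,t}$), and solves this quadratic system to get $\bar\lambda\,g\le\phi_t+c\sqrt{g}$ directly---no $\|v_t\|$ appears on the right, hence no Young step and no lost constant. Both routes are sound; yours is a shade more elementary, the paper's is sharper on the constant.
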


Combining the results in Theorem~\ref{thm:odeell} and \ref{thm:odef}, we can see that the choice of   sequence $\{\alpha_t\colon t\in \RRplus \}$ controls how fast we want to decrease $\L$ vs. $F$. Large $\alpha_t$ yields faster descent on $\L$, but slower descent on $F$. 
Theoretically, 
using a sequence that satisfies $\int\alpha_t\df t =+\infty$ and $\int \alpha_t^\gamma \df t<+\infty$ for some $\gamma>1$   allows us to ensure that both $\min_{s\in[0,t]} g(\cc_s)$ and $\min_{s\in[0,t]} \norm{\df \theta/\df s}^2$ converge to zero. 
If we use a  constant sequence $\alpha_\tim =\alpha$, 
it introduces an $\bigO(\alpha \sqrt{g_\epsilon})$ term that does not vanish as $t\to +\infty$. However, 
we can expect that $g_\epsilon$ is  small when $\epsilon$ is small
for well-behaved functions. %
In practice, we find that constant $\alpha_\tim$
 works sufficiently well.

\section{Related Work} \label{sec: review}
\paragraph{Optimization Algorithms for MOO}
There has been a rising interest in MOO in deep learning, 
mostly in the context of multi-task learning.
But most existing methods can not be applied to 
the general OPT-in-Pareto problem. %
A large body of recent works focus on improving non-convex optimization  for finding \emph{some} model in the Pareto set, 
but cannot search for a \emph{special} model satisfying a specific criterion %
\citep{chen2018gradnorm,kendall2018multi,NEURIPS2018_432aca3a,yu2020gradient,NEURIPS2020_16002f7a,Wu2020Understanding,fifty2020measuring,javaloy2021rotograd}. %

\paragraph{Specific Instantiations of OPT-in-Pareto}
One previous work \citep{mahapatra2020multi} and two concurrent works \citep{kamani2021pareto,chen2021weighted} study specific instantiations of the general OPT-in-Pareto problem and thus are highly related to this paper. We give a detailed review. \citet{mahapatra2020multi} aims to search Pareto model that satisfies a constraint on the ratio between the different objectives, which can be viewed as OPT-in-Pareto problem when the criterion $F$ is a proper measure of constraint violation (i.e, the non-uniformity score defined in \citet{mahapatra2020multi}). EPO, the proposed algorithm in \citet{mahapatra2020multi} heavily relies on a special property of the ratio constraint problem: there always exists an updating direction that either gives Pareto improvement or reduces the constraint violation or both. However, a general OPT-in-Pareto problem does not have such nice property, making EPO only a specialized algorithm for the ratio constraint problem rather than a general OPT-in-Pareto problem. In section \ref{sec: subset application} we demonstrate that PNG is able to recover the functionality of EPO while being a more general algorithm for OPT-in-Pareto. \citep{kamani2021pareto} formulate the fairness learning as a MOO problem in which the accuracy and fairness measure are considered as the two objectives. It first proposes PDO, an algorithm that converges to Pareto stationary set by viewing MOO as a bi-level optimization (which is a standard MOO algorithm that does not solve any instance of OPT-in-Pareto) and then BP-PDO, an modification of PDO that seeks a Pareto model that satisfies the ratio-constraint considered in \citet{mahapatra2020multi}. Admittedly, it is possible to extend the BP-PDO for general OPT-in-Pareto problems but such extension is non-trivial: even for the special ratio-constraint problem, it is unclear what convergence and optimality guarantee BP-PDO has (only guarantee of PDO is given in \citet{kamani2021pareto}). In comparison, our PNG is shown to converge to the local optimum of OPT-in-Pareto problem. \citet{chen2021weighted} aims to pre-train a multi-task model such that the representations of the tasks are similar. Their problem is essentially an OPT-in-Pareto problem where the discrepancy of task representations are chosen as the criterion function. Compared with PNG, the proposed TAWT algorithm requires the computation of inverse Hessian product at each iteration making its computational cost large.

\paragraph{Approximation of Pareto Set}
There has been increasing interest in
finding a compact approximation of the Pareto set. \citet{navon2020learning, lin2020controllable} use hypernetworks to approximate the map from linear scalarization weights to the corresponding Pareto solutions; these methods could not fully profile non-convex Pareto fronts due to the limitation of linear scalarization \citep{boyd2004convex}, and the use of hypernetwork introduces extra optimization difficulty. 
Another line of works \citep{lin2019pareto,mahapatra2020multi} approximate  the Pareto set by Pareto models with different user preference vectors that rank the relative importance of different tasks; these methods need a good heuristic design of preference vectors, which requires prior knowledge of the Pareto front. 
\citet{ma2020efficient} leverages  manifold gradient to conduct a local random walk on the Pareto set but suffers from the high computational cost. \citet{deist2021multi} approximates the Pareto set by maximizing hypervolume, which also requires prior knowledge for a careful choice of good reference vector. \citet{liu2021profiling} introduces a repulsive force to encourage the model diversity without hurting their Pareto Optimality.

\paragraph{Applications of MOO}
Multi-task learning can also be applied to improve the learning in many other domains including domain generalization \citep{dou2019domain,Carlucci_2019_CVPR,albuquerque2020improving}, domain adaption \citep{sun2019unsupervised,luo2021learnable}, model uncertainty \citep{NEURIPS2019_a2b15837,zhang2020auxiliary,xie2021innout}, adversarial robustness \citep{yang2020multitask} and semi-supervised learning \citep{NEURIPS2020_06964dce}. All of those applications utilize a linear scalarization to combine the multiple objectives and it is thus interesting to apply the proposed OPT-in-Pareto framework, which we leave for future work.

\begin{figure*}[t]
\begin{centering}
\vspace{-0.1cm}
\hspace{-10mm}
\includegraphics[width=.28\textwidth]{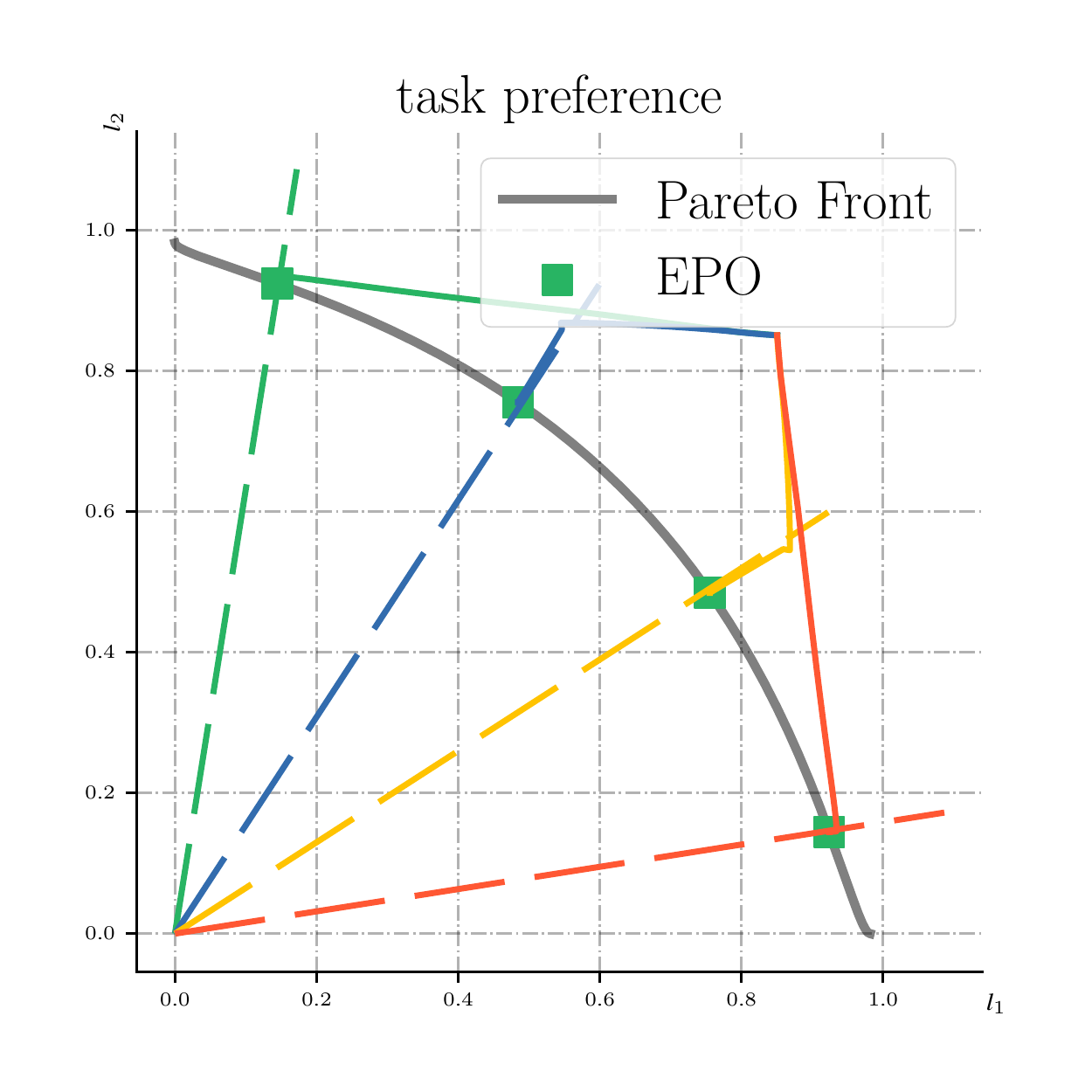}\hspace{-5mm}
\includegraphics[width=.28\textwidth]{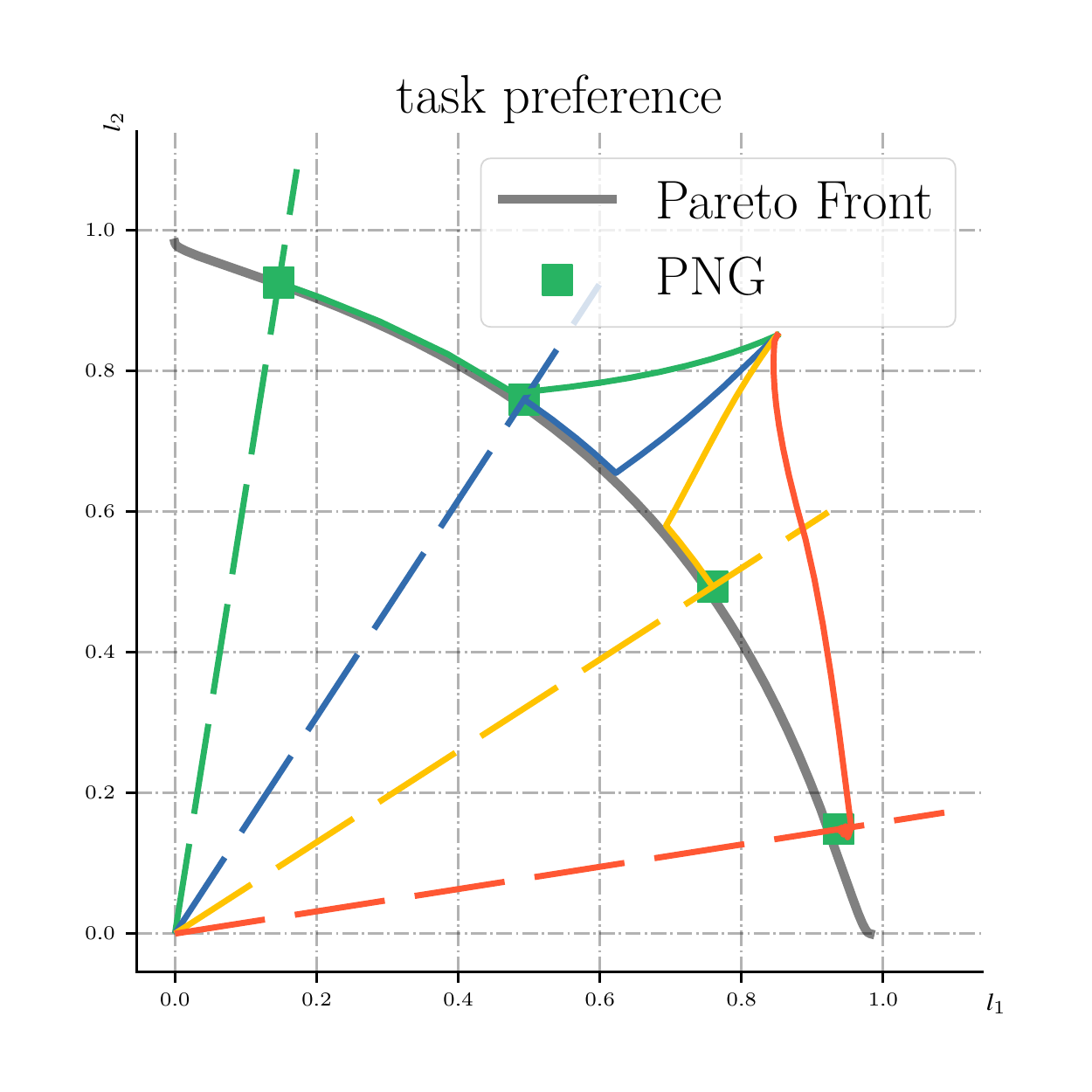}\hspace{-5mm}
\includegraphics[width=.28\textwidth]{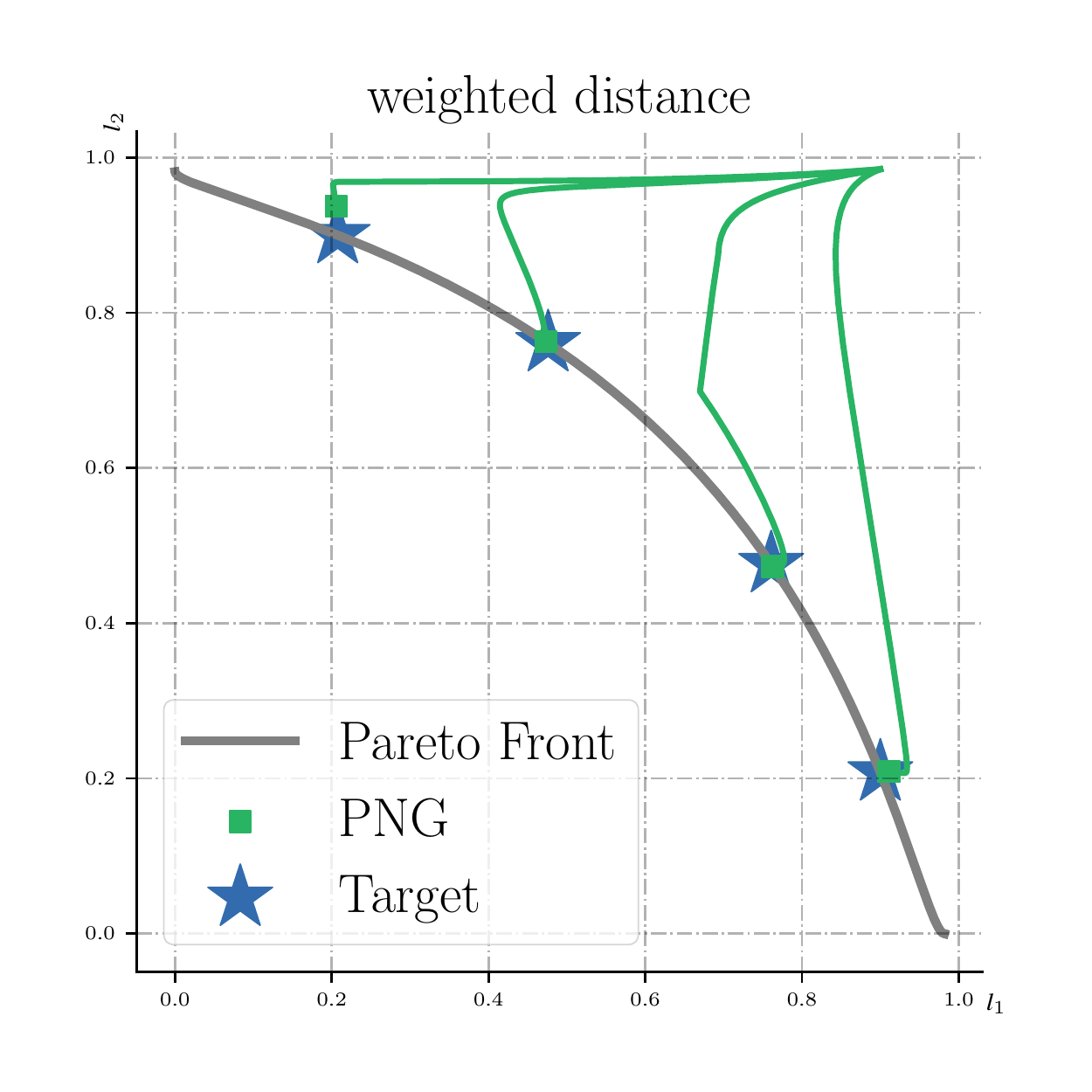}\hspace{-5mm}
\includegraphics[width=.28\textwidth]{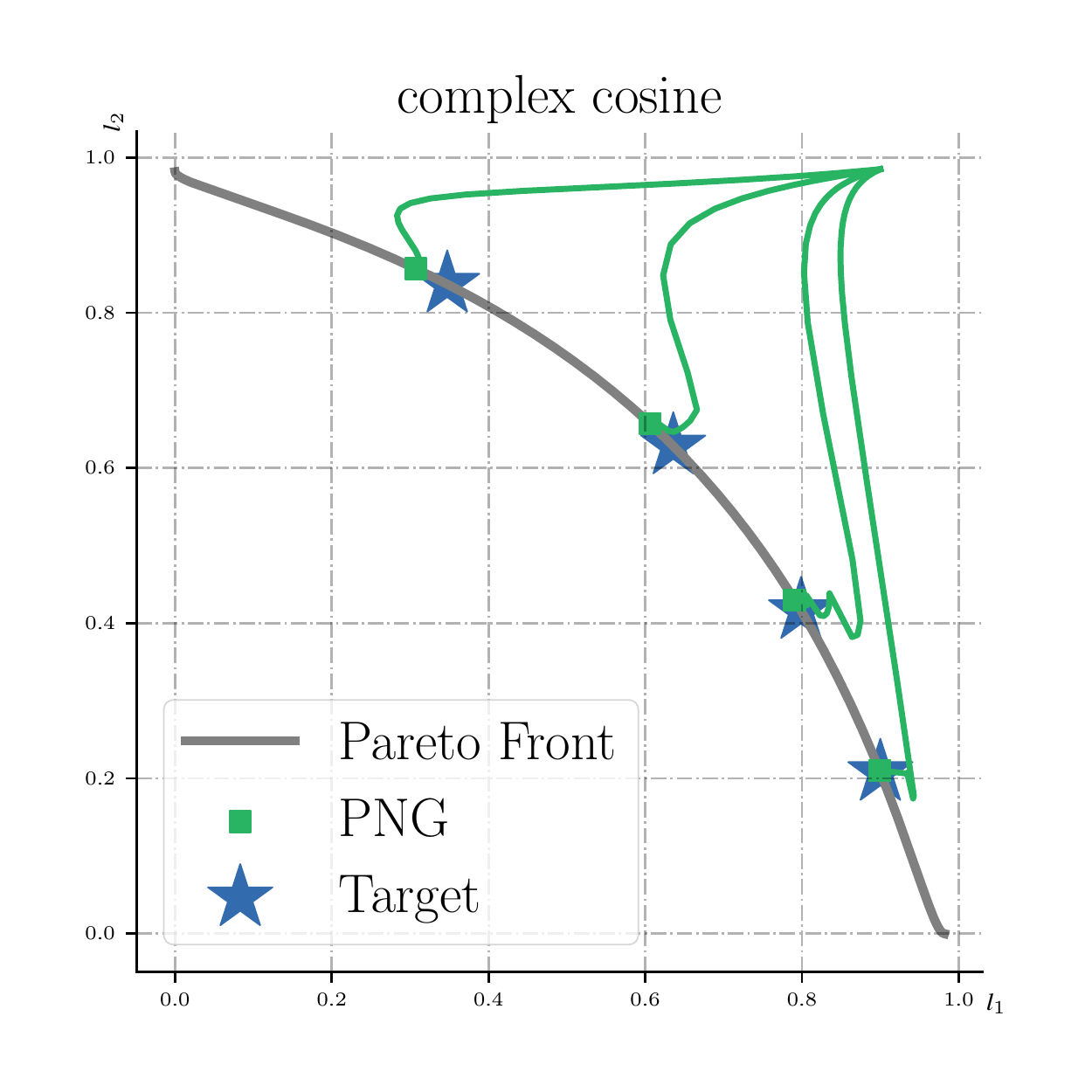}\hspace{-10mm}
\par\end{centering}
\vspace{-0.4cm}
\begin{tabular}{cccc}
\hspace{.1\textwidth}
\small (a) \hspace{.19\textwidth} & 
\small (b) \hspace{.175\textwidth} & 
\small (c)  \hspace{.175\textwidth} & 
\small (d)  \hspace{.19\textwidth}
\end{tabular}
\vspace{-1.2\baselineskip}
\caption{(a)-(b): the trajectory of finding Pareto models that satisfy different ratio constraints (shown in different colors) on the two objectives $\ell_1,\ell_2$ using EPO and PNG; 
we can see that PNG can achieve the same goal as EPO (with different trajectories) while being a more general approach. 
(c)-(d): the trajectory of finding Pareto models that minimize the weighted distance and complex cosine criterion using PNG. The green dots indicate the converged models. We can see that PNG can successfully locate the correct Pareto models that minimize different criteria.}\label{fig: epo recover}
\vspace{-0.5cm}
\end{figure*}

\section{Empirical Results} 
We introduce three applications of OPT-in-Pareto with {\PNG}: Singleton Preference, Pareto approximation and improving multi-task based domain generalization method. We also conduct additional study on how the learning dynamics of PNG changes with different hyper-parameters ($\alpha_t$ and $\ep$), which are included in Appendix \ref{appendix_sec: dynamics}. Other additional results that are related to the experiments in Section \ref{sec: subset application} and \ref{sec: exp approx} and are included in the Appendix will be introduced later in their corresponding sections. Code is available at \url{https://github.com/lushleaf/ParetoNaviGrad}.
\subsection{Finding Preferred Pareto Models} \label{sec: subset application}
We consider the synthetic example used in \citet{lin2019pareto, mahapatra2020multi}, which consists of two losses:
$\ell_{1}(\th)=1-\exp(-\left\Vert \th-\eta \right\Vert ^{2})$ and $\ell_{2}(\th)=1-\exp(-\left\Vert \th+\eta \right\Vert ^{2})$, where
$\eta = n^{-1/2}$ and 
$\dimcc=10$ is dimension of the parameter $\cc$. 

\paragraph{Ratio-based Criterion}
We first show that {\PNG} can solve the search problem under the ratio constraint of objectives in \citet{mahapatra2020multi}, i.e., finding a point $\th\in \P^*\cap \Omega$ with $\Omega=\{\th:r_{1}\ell_{1}(\th)=r_{2}\ell_{2}(\th)=...=r_{m}\ell_{m}(\th)\}$, given some preference vector $r=[r_1,...,r_m]$. We apply {\PNG} with the non-uniformity score defined in \citet{mahapatra2020multi} as the criterion, and compare with their algorithm called exact Pareto optimization (EPO).  
We show in  Figure \ref{fig: epo recover}(a)-(b) the trajectory of {\PNG} and EPO 
 for searching models with different preference vector $r$, starting from the same randomly initialized point. 
Both {\PNG} and EPO converge to the correct solutions but with different trajectories. {\color{black} This suggests that PNG is able to achieve the same functionality of finding ratio-constraint Pareto models as \citet{mahapatra2020multi,kamani2021pareto} do but being versatile to handle general criteria.} We refer readers to Appendix \ref{appendix_sec: ratio-based preference} for more results with different choices of hyper-parameters and the experiment details.

\paragraph{Other Criteria} 
We demonstrate that {\PNG} is able to find solutions for general choices of $F$. We consider the following designs of $F$:
1) weighted $\ell_2$ distance w.r.t. a reference vector $r\vv\in \RRplus^m$, that is, 
$F_{\text{wd}}(\th)=\sum_{i=1}^m (\ell_{i}(\th)-r_i)^{2}/r_i$; 
and 2) complex cosine: in which $F$ is a complicated function related to the cosine of task objectives, i.e., $F_{\text{cs}}=-\cos\left(\pi(\ell_{1}(\th)-r_1)/2\right) +(\text{cos}(\pi(\ell(\cc_{2})-r_2)) + 1)^2$. {\color{black} Here the weighted $\ell_2$ distance can be viewed as finding a Pareto model that has the losses close to some target value $r$, which can be viewed as an alternative approach to partition the Pareto set. The design of complex cosine aims to test whether PNG is able to handle a very non-linear criterion function.} In both cases, we take 
$r_1 =[0.2, 0.4, 0.6, 0.8]$ and $r_2 = 1-r_1$.  
We show in Fig \ref{fig: epo recover}(c)-(d) 
the trajectory of {\PNG}. As we can see, {\PNG} is able to correctly find the optimal solutions of OPT-in-Pareto. We also test {\PNG} on a more challenging ZDT2-variant used in \citet{ma2020efficient} and a larger scale MTL problem \citep{liu2019end}, for which we refer readers to Appendix \ref{appendix_sec: zdt} and \ref{appendix_sec: mtan}. 

\begin{table*}[t]
\begin{centering}
\vspace{-0.3cm}
\scalebox{1.0}{
\begin{tabular}{c|c|cc|cc}
\toprule
\multirow{2}{*}{Data} & \multirow{2}{*}{Method} & \multicolumn{2}{c|}{Loss} & \multicolumn{2}{c}{Acc}\tabularnewline
 &  & HV$\uparrow$ ($10^{-2}$) & IGD+$\downarrow$ ($10^{-2}$) & HV$\uparrow$ ($10^{-2}$) & IGD+$\downarrow$ ($10^{-2}$)\tabularnewline
\hline 
\multirow{4}{*}{Multi-MNIST} & Linear & $7.48\pm0.11$ & $0.142\pm0.034$ & $9.27\pm0.024$ & $0.036\pm0.0084$\tabularnewline
 & MGD & $7.69\pm0.10$ & $0.051\pm0.011$ & $9.27\pm0.023$ & $0.008\pm0.0010$\tabularnewline
 & EPO & $\pmb{7.87\ensuremath{\pm}0.16}$ & $0.069\pm0.028$ & $9.17\pm0.032$ & $0.065\pm0.0181$\tabularnewline
 & {\PNG} & $\pmb{7.86\ensuremath{\pm}0.11}$ & $\pmb{0.042\ensuremath{\pm}0.012}$ & $\pmb{9.39\ensuremath{\pm}0.036}$ & $\pmb{0.006\ensuremath{\pm}0.0022}$\tabularnewline
\hline 
\multirow{4}{*}{Multi-Fashion} & Linear & $0.38\pm0.059$ & $0.127\pm0.013$ & $4.76\pm0.019$ & $0.064\pm0.012$\tabularnewline
 & MGD & $0.42\pm0.064$ & $0.046\pm0.016$ & $4.77\pm0.019$ & $\pmb{0.023\ensuremath{\pm}0.003}$\tabularnewline
 & EPO & $0.36\pm0.058$ & $0.308\pm0.109$ & $4.78\pm0.030$ & $0.211\pm0.020$\tabularnewline
 & {\PNG} & $\pmb{0.47\ensuremath{\pm}0.066}$ & $\pmb{0.016\ensuremath{\pm}0.002}$ & $\pmb{4.81\ensuremath{\pm}0.021}$ & $\pmb{0.023\ensuremath{\pm}0.003}$\tabularnewline
\hline 
\multirow{4}{*}{Fashion-MNIST} & Linear & $5.01\pm0.057$ & $0.167\pm0.054$ & $8.46\pm0.046$ & $0.110\pm0.035$\tabularnewline
 & MGD & $5.09\pm0.069$ & $0.060\pm0.029$ & $8.40\pm0.045$ & $\pmb{0.049\ensuremath{\pm}0.011}$\tabularnewline
 & EPO & $4.60\pm0.166$ & $0.233\pm0.054$ & $8.12\pm0.041$ & $0.385\pm0.077$\tabularnewline
 & {\PNG} & $\pmb{5.27\ensuremath{\pm}0.054}$ & $\pmb{0.048\ensuremath{\pm}0.027}$ & $\pmb{8.53\ensuremath{\pm}0.047}$ & $\pmb{0.046\ensuremath{\pm}0.022}$\tabularnewline
\bottomrule
\end{tabular}
}
\par\end{centering}
\caption{
Results of approximating the Pareto set 
by different methods on three MNIST benchmark datasets. The numbers in the table are the averaged value and the standard deviation. 
Bolded values indicate the statistically significant best result with p-value less than 0.5 based on matched pair t-test.} \label{tbl: mnist}
\end{table*}

\begin{table*}
\begin{centering}
\scalebox{1.0}{
\begin{tabular}{c|cccc|c}
\toprule
PACS & art paint & cartoon & sketches & photo & Avg\tabularnewline
\hline 
D-SAM & $0.7733$ & $0.7243$ & $0.7783$ & $0.9530$ & $0.8072$\tabularnewline
DeepAll & $0.7785$ & $0.7486$ & $0.6774$ & $0.9573$ & $0.7905$\tabularnewline
\hline 
JiGen & $0.8009\pm0.004$ & $0.7363\pm0.007$ & $0.7046\pm0.013$ & $\pmb{0.9629\ensuremath{\pm}0.002}$ & $0.8012\pm0.002$\tabularnewline
JiGen+adv & $0.7923\pm0.006$ & $0.7402\pm0.004$ & $0.7188\pm0.005$ & $0.9617\pm0.001$ & $0.8033\pm0.001$\tabularnewline
JiGen+PNG & $\pmb{0.8014\ensuremath{\pm}0.005}$ & $\pmb{0.7538\ensuremath{\pm}0.001}$ & $\pmb{0.7222\ensuremath{\pm}0.006}$ & $\pmb{0.9627\ensuremath{\pm}0.002}$ & $\pmb{0.8100\ensuremath{\pm}0.005}$\tabularnewline
\bottomrule 
\end{tabular}
}
\par\end{centering}
\caption{Comparing different methods for domain generalization on PACS using ResNet-18. The values in table are the testing accuracy with its  standard deviation. The bolded values are the best models with p-value less than $0.1$ based on match-pair t-test.}\label{tbl: domain_small}
\vspace{-0.3cm}
\end{table*}

\subsection{Finding Diverse Pareto Models} \label{sec: exp approx}

\begin{figure}
\begin{centering}
\includegraphics[scale=0.13]{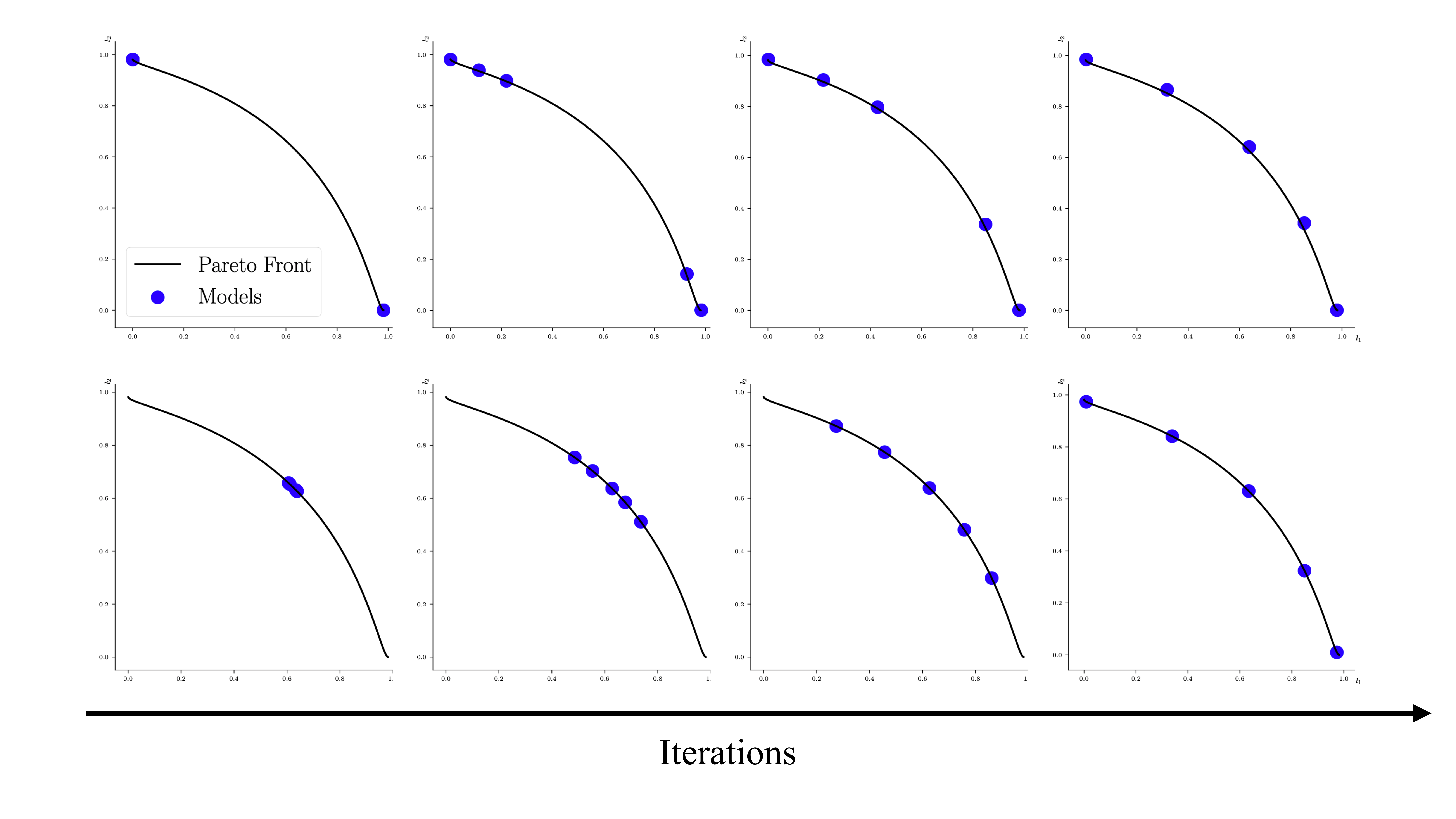}
\end{centering}
\vspace{-0.3cm}
\caption{Evolution of models from different initializations. Upper row starts with models at the boundary of the Pareto set. Lower row considers clustered initializations.} \label{fig: engergy_toy}
\vspace{-0.5cm}
\end{figure}

\paragraph{Synthetic Examples}
We reuse the synthetic example introduced in Section \ref{sec: subset application}. We consider learning 5 models to approximate the Pareto front staring from two types of extremely bad initializations.  Specifically, in the upper row of Figure \ref{fig: engergy_toy}, we consider initializing the models using linear scalarization. Due to the concavity of the Pareto front, linear scalarization can only learns models at the two extreme end of the Pareto front. The second row uses MGD for initialization and the models is scattered at an small region of the Pareto front. Different from the algorithm proposed by \citet{lin2019pareto} which relies on a good initialization, using the proposed energy distance function, PNG pushes the models to be equally distributed on the Pareto Front without the need of any prior information of the Pareto front even with extremely bad starting point.

\paragraph{Multi-MNIST Benchmark} We consider the problem of finding diversified points from the Pareto set by minimizing the energy distance criterion in \eqref{eqn: energy}. 
We use the same setting as \citet{lin2019pareto,mahapatra2020multi}. We consider three benchmark datasets: (1) MultiMNIST, (2) MultiFashion, and (3) MultiFashion+MNIST. For each dataset, there are two tasks (classifying the top-left and bottom-right images). We consider LeNet with multihead and train $N=5$ models to approximate the Pareto set. For baselines, we compare with 
linear scalarization, MGD \citep{NEURIPS2018_432aca3a}, and EPO \citep{mahapatra2020multi}. %
For the MGD baseline, we find that naively running it leads to poor performance as the learned models are not diversified and thus we initialize the MGD with 60-epoch runs of linear scalarization with equally distributed preference weights and runs MGD for the later 40 epoch. We refer the reader to Appendix \ref{appendix_sec: pareto approximation exp} for more details of the experiments.

We measure the quality of how well the found models $\{\theta_1,\ldots,\theta_N\}$ approximate the Pareto set using two standard metrics: Inverted Generational Distance Plus (IGD+) \citep{ishibuchi2015modified} and  hypervolume (HV) \citep{zitzler1999multiobjective}; see Appendix \ref{appendix_sec: pareto approximation metric} for their definitions. We run all the methods with 5 independent trials and report the averaged value and its standard deviation in Table \ref{tbl: mnist}. We report the scores calculated based on loss (cross-entropy) and accuracy on the test set. The bolded values indicate the best result with p-value less than 0.05 (using matched pair t-test). In most cases, {\PNG} improves the baselines by a large margin. We include ablation studies in Appendix \ref{appendix_sec: pareto approximation abl} and additional comparisons with the second-order approach proposed by \citet{ma2020efficient} in Appendix \ref{appendix_sec: compare second order}.

\subsection{Application to Multi-task based Domain Generalization Algorithm}
JiGen \citep{carlucci2019domain} learns a domain generalizable model by learning two tasks based on linear scalarization, which essentially searches for a model in the Pareto set and requires choosing the weight of linear scalarization carefully. It is thus natural to study whether there is a better mechanism that dynamically adjusts the weights of the two losses so that we eventually learn a better model. Motivated by the
adversarial feature learning \citep{JMLR:v17:15-239}, we propose to improve JiGen such that the latent feature representations of the two tasks are well aligned. This can be framed into an OPT-in-Pareto problem where the criterion is the discrepancy of the latent representations (implemented using an adversarial discrepancy module in the network) of the two tasks. PNG is applied to solve the optimization. We evaluate the methods on PACS \citep{Li_2017_ICCV}, which covers 7 object categories and 4 domains (Photo, Art Paintings, Cartoon, and Sketches). The model is trained on three domains and tested on the rest of them. Our approach is denoted as JiGen+PNG and we also include JiGen + adv, which simply adds the adversarial loss as regularization and two other baseline methods (D-SAM \citep{d2018domain} and DeepAll \citep{carlucci2019domain}). For the three JiGen based approaches, we run 3 independent trials and for the other two baselines, we report the results in their original papers. Table \ref{tbl: domain_small} shows the result using ResNet-18, which  demonstrates 
the improvement by the application of the OPT-in-Pareto framework. We also include the results using AlexNet in the Appendix. Please see Appendix \ref{appendix_sec: dg} for the additional results and more experiment details.

\section{Conclusion}
This paper studies the OPT-in-Pareto, a problem that has been studied in operation research with restrictive linear or convexity assumption but largely under-explored in deep learning literature, in which the objectives are non-linear and non-convex. Applying algorithms such as manifold gradient descent requires eigen-computation of the Hessian matrix at each iteration and thus can be expensive. We propose a first-order approximation algorithm called Pareto Navigation Gradient Descent (PNG) with theoretically guaranteed descent and convergence property to solve OPT-in-Pareto. %

\bibliography{ref}

\appendix
\onecolumn
\title{Appendix for Optimization in Pareto Set: Searching Special Pareto Models}
\maketitle

\section{Theoretical Analysis}

\paragraph{Theorem~\ref{thm: sol} {[Dual of \eqref{opt: relax}]}}
\emph{The solution $v_t$ of \eqref{opt: relax}, 
if it exists, has a form of 
\bb%
v_\tim = \dd F(\cc_\tim) + \sum_{\i=1}^m \lambda_{i,t} \dd \ell_\i(\cc_\tim),
\ee%
with $\{\lambda_{i,t}\}_{\i=1}^m$ %
the solution of the following dual problem 
\bb%
\max_{\lambda\in\RRplus^m}-\frac{1}{2}\left\Vert \nabla F(\cc_\tim)+\sum_{\i=1}^{m}\lambda_\tim\nabla\ell_\i(\th_\tim)\right\Vert ^{2}+\sum_{\i=1}^{m}\lambda_\i\phi_\tim,
\ee%
where $\RRplus^m$ is the set of nonnegative  $m$-dimensional vectors, that is, 
$\RRplus^m = \{\lambda\in\RR^m\colon \lambda_\i\geq 0,~~\forall \i\in[m]\}$. 
}
\begin{proof}
By introducing Lagrange multipliers, the optimization in \eqref{opt: relax} is equivalent to  the following minimax problem: 
\[
\min_{v\in \RR^\dimcc}\max_{\lambda\in \RR_+^m }\frac{1}{2}\left\Vert \nabla F(\th_\tim)-v\right\Vert  ^{2}+\sum_{\i=1}^{m}\lambda_\i\left( \phi_\tim -%
\nabla\ell_\i(\th_\tim)
\tt v
\right). %
\]
With strong duality of convex quadratic programming (assuming the primal problem is feasible), %
we can exchange the order of min and max, yielding  
\begin{align*}
\max_{\lambda\in \RR_+^m }
\left\{ \Phi(\lambda)\defeq \min_{v\in \RR^\dimcc}
\frac{1}{2}\left\Vert \nabla F(\th_\tim)-v\right\Vert  ^{2}+\sum_{\i=1}^{m}\lambda_\i\left( \phi_\tim -%
\nabla\ell_\i(\th_\tim)
\tt v
\right) \right\}. %
\end{align*}
It is easy to see that the 
minimization w.r.t. $v$ is achieved 
when $v =\nabla F(\cc_\tim) + \sum_{\i=1}^{m}\lambda_\i\nabla\ell_\i(\th_\tim)$. 
Correspondingly, 
the $\Phi(\lambda)$ has the following dual form: 
\[
\max_{\lambda\in\RRplus^m}
-\frac{1}{2}\left\Vert \nabla F(\cc_\tim)+\sum_{\i=1}^{m}\lambda_\i\nabla\ell_\i(\th_\tim)\right\Vert ^{2}+\sum_{\i=1}^{m}\lambda_\i\phi_\tim.  %
\]
This concludes the proof. 
\end{proof}

\paragraph{Theorem~\ref{thm:odeell} [Pareto Improvement on $\L$]}
 \emph{Under Assumption~\ref{asm:basic}, 
assume $\cc_0\not\in\P_\ep$, and $t_\ep$ is the first time when $\cc_{t_\ep} \in \P_\ep$, then for any time $t < t_\ep,$ 
\bb 
\frac{\df}{\df t} \ell_\i( \cc_t) \leq -\alpha_t g(\cc_t),%
&&&
\min_{s\in[0,t]} g(\cc_t) \leq \frac{\min_{\i\in[m]}(\ell_\i(\cc_0)-\ell_i\true)}{\int_0^t \alpha_s \df s}.
\ee 
Therefore,  the update yields Pareto improvement on $\L$ when $\cc_t \not\in \P_\ep$ and  $\alpha_t g(\cc_t)>0$.\\
Further, 
if $\int_0^t \alpha_s \df s = +\infty$, 
then for any $\epsilon>\ep$,  there exists a finite time $t_\epsilon \in \RRplus$ on which the solution enters $\P_{\epsilon}$ and stays within $\overline \P_\epsilon$ afterwards, that is, we have $\cc_{t_\epsilon} \in \P_\epsilon$ and $\cc_t\in \overline \P_\epsilon$ 
for any $t \geq t_\epsilon$. 
}
\begin{proof} 

i) 
When $t < t_\ep$, we have $g(\cc_t) > \ep$ and hence 
\begin{align}\label{equ:tbl}
\frac{\df}{\df t} \ell_\i(\th_{\tim})
= - \nabla\ell_\i(\th_\tim)\tt v_\tim 
\leq - \phi_\tim = -\alpha_\tim g(\cc_\tim),
\end{align}
where we used the constraint of $\dd \ell_\i(\cc_\tim)\tt v_\tim\geq \phi_\tim$ in \eqref{opt: relax}. 
Therefore, we yield strict decent on all the losses $\{\ell_\i\}$
when $\alpha_t g(\cc_t) >0$. 

ii) 
Integrating both sides of \eqref{equ:tbl}: 
$$
\min_{s\in[0,t]}{g(\cc_s)} \leq \frac{\int_0^t \alpha_{s} g(\cc_s) \df s}{\int_0^t\alpha_s \df s} \leq 
\frac{\ell_\i(\th_0) - \ell_\i(\th_t)}{\int_0^t\alpha_s \df s} 
\leq \frac{\ell_\i(\th_0) - \ell\true}{\int_0^t\alpha_s \df s}.
$$
 This yields the result since it holds for every $i\in[m]$.
 
 If $\int_0^\infty \alpha_t \df t = +\infty$, then we have $\min_{s\in [0,t]} g(\cc_s) \to 0$ when $t\to +\infty$. Assume there exists an $\epsilon > \ep$, such that $\cc_t$ never enters $\P_\epsilon$ at finite $t$. Then we have $g(\cc_t) \geq \epsilon$ for $t\in \RRplus$,  which contradicts with  $\min_{s\in [0,t]} g(\cc_s) \to 0$.

iii) 
Assume there exists a finite time $t' \in (t_\epsilon, +\infty)$ such that $\cc_{t'}\not\in \overline \P_\epsilon$. 
Because $\epsilon > \ep$ and $g$ is continuous, 
$\P_e$ is in the interior of $\P_\epsilon \subseteq \overline \P_\epsilon$. 
Therefore, the trajectory leading to $\cc_{t'}\not\in \overline \P_\epsilon$  must pass through $\overline \P_{\epsilon}\setminus \P_\ep$ at some point, that is, there exists a point $t'' \in [t_\epsilon, t')$, such that $\{\cc_{t} \colon t\in[ t'', t'] \} \not\in  \P_\ep$. 
But because the algorithm can not increase any objective $\ell_i$ outside of $\P_\ep$, we must have $\L(\cc_{t'}) \preceq \L(\cc_{t''})$, yielding that $\cc_{t'}\in\overline{\{\cc_{t''}\}} \subseteq \overline \P_{\epsilon}$, where $\overline{\{\cc_{t''}\}}$ is the Pareto closure of ${\{\cc_{t''}\}}$; this contradicts with the assumption. 
\end{proof} 

\paragraph{Theorem~\ref{lem:dfjijgfgfgfgifjg}} 
\emph{ 
Under Assumption~\ref{asm:basic}, 
assume $\cc_t \not\in \P_\ep$  
is a fixed point of the algorithm, that is, $\frac{\df \theta_t}{\df t} = -v_t = 0$, 
and $F$, $\L$ are  convex in a neighborhood $\cc_t$, then 
$\cc_t$ is a local minimum of $F$
in the Pareto closure $\overline{\{\theta_t\}}$, %
that is,  there exists a neighborhood of $\cc_t$ in which  there exists no point $\cc'$ such that $F(\cc') < F(\cc_t)$ and $\L(\cc') \preceq \L(\cc_t)$. 
} 
\begin{proof} 
Note that minimizing $F$ in $\overline{\{\theta_t\}}$ can be framed into a constrained optimization problem:
$$
\min_{\cc} F(\cc) ~~~~s.t.~~~~\ell_i(\cc) \leq \ell_i(\cc_t),~~\forall i\in[m]. 
$$
In addition, by assumption, $\cc=\cc_t$ satisfies $v_t = \dd F(\cc_t) + \sum_{i=1}^m \lambda_{i,t}\dd \ell_i(\cc_t) = 0$, which is the KKT stationarity condition of the constrained optimization. 
It is also obvious to check that $\cc=\cc_t$ satisfies the feasibility and slack condition trivially. Combining this with the local convexity assumption yields the result. 
\end{proof}

\paragraph{Theorem~\ref{thm:odef} [Optimization of $F$]}
\emph{Let  $\epsilon > \ep$ and assume $g_{\epsilon} \defeq \sup_{\cc} \{g(\cc) \colon ~\cc\in \overline \P_\epsilon\}<+\infty$ and  $\sup_{t\geq0}\alpha_t<\infty$.   
Under Assumption~\ref{asm:basic}, 
when we initialize from $\cc_0 \in \P_\epsilon$, %
we have 
$$
\min_{s\in[0,t]}\norm{ \frac{\df\cc_s}{\df s} }^2 \leq 
\frac{F(\cc_0)-F\true}{t} + \frac{1}{t}\int_{0}^t 
\alpha_s \left (\alpha_s {g_\epsilon}    + 
c \sqrt{g_\epsilon}  \right ) 
\df s. 
$$
In particular, 
if we have $\alpha_t = \alpha = const$, then 
$\min_{s\in[0,t]}\norm{\df \theta_s/\df s}^2  = \bigO\left (1/t + \alpha \sqrt{g_\epsilon} \right ).$ \\
If  
$%
\int_0^\infty\alpha_t^\gamma\df t <+\infty$ for some $\gamma \geq 1$, we have 
$\min_{s\in[0,t]}\norm{\df \theta_s/\df s}^2  = \bigO(1/t + \sqrt{g_\epsilon}/t^{1/\gamma}).$
} 
\begin{proof} 
i) 
The slack condition of the constrained optimization in \eqref{opt: relax} says that 
\bbb \label{equ:slack001}
\lambda_{i,t}\left(\nabla\ell_\i(\th_\tim)\tt v_\tim
 -\phi_\tim \right)=0,\ \forall \i\in[m].
\eee
This gives that 
\begin{align}
\left\Vert v_\tim \right\Vert ^{2} 
& = \left ({\nabla F(\th_\tim)+\sum_{\i=1}^m \lambda_{i,t}\nabla\ell_\i(\th_\tim)} \right ) \tt v_\tim \notag \\ 
& = \dd F(\th_\tim) \tt v_\tim + \sum_{\i=1}^m \lambda_{i,t} \phi_\tim \ant{plugging \eqref{equ:slack001}}. \label{equ:ddfdifd1}
\end{align}
If $\cc_t\not\in \P_\ep$, we have $\phi_t = \alpha_t g(\cc_t)$ and this gives 
\bb 
\frac{\df }{\df t} F(\cc_t) 
= - \dd F(\cc_t)\tt v_t  = 
- \left\Vert v_\tim \right\Vert ^{2} + \sum_{\i=1}^m \lambda_{i,t} \phi_\tim 
= - \left\Vert \frac{\df \theta_\tim}{\df t} \right\Vert ^{2} + \sum_{\i=1}^m \lambda_{i,t} \alpha_t g(\cc_t)
\ee 
If $\cc_t$ is in the interior of $\P_\ep$, then we run typical gradient descent of $F$ and hence has 
\bb 
\frac{\df }{\df t} F(\cc_t)  = 
- \left\Vert v_\tim \right\Vert ^{2} = -  \left\Vert \frac{\df \theta_\tim}{\df t} \right\Vert ^{2}. 
\ee  
If $\cc_t$ is on the boundary of $\P_\ep$, then by the definition of differential inclusion, 
$\df \cc/\df t$ belongs to the convex hull of the velocities that it receives from either side of the boundary, yielding that 
$$
\frac{\df }{\df t} F(\cc_t)= 
- \left\Vert \frac{\df \theta_\tim}{\df t} \right\Vert ^{2} + \beta \sum_{\i=1}^m \lambda_{i,t} \alpha_t g(\cc_t) 
\leq - \left\Vert \frac{\df \theta_\tim}{\df t} \right\Vert ^{2} +  \sum_{\i=1}^m \lambda_{i,t} \alpha_t g(\cc_t),
$$
where $\beta \in[0,1]$. 
Combining all the cases gives
$$
\frac{\df }{\df t} F(\cc_t) \leq 
- \left\Vert \frac{\df \theta_\tim}{\df t} \right\Vert ^{2} +  \sum_{\i=1}^m \lambda_{i,t} \alpha_t g(\cc_t). %
$$
Integrating this yields
\bb
\min_{s\in[0,t]}\norm{ \frac{\df\cc_s}{\df s} }^2
\leq \frac{1}{t} \int_0^t \norm{ \frac{\df\cc_s}{\df s} }^2 \df s 
& \leq\frac{F(\cc_0)-F\true}{t} + \frac{1}{t}\int_{0}^t 
\sum_{\i=1}^m \lambda_{i,s} \alpha_s g(\cc_s) \df s \\ 
& \leq\frac{F(\cc_0)-F\true}{t} + \frac{1}{t}\int_{0}^t  \alpha_s \left (\alpha_s {g_\epsilon}    + c \sqrt{g_\epsilon}  \right ) \df s,
\ee  
where the last step used Lemma~\ref{lem:cgbound} with $\phi_t = \alpha_t g(\cc_t)$:
$$
 \sum_{\i=1}^m \lambda_{i,t} \alpha_\tim g(\cc_\tim) 
\leq  \alpha_\tim^2 g(\cc_\tim) + c \alpha_\tim \sqrt{g(\cc_\tim)} 
\leq  \alpha_\tim^2 g_\epsilon + c \alpha_\tim \sqrt{g_\epsilon},
$$
and here we used $g(\cc_\tim) \leq g_\epsilon$ because the trajectory is contained in $\overline \P_\epsilon$ following Theorem~\ref{thm:odeell}.

The remaining results follow Lemma~\ref{lem:alphako}. 
\end{proof}

\subsubsection{Technical Lemmas} 
\begin{lemma}\label{lem:cgbound}
Assume Assumption~\ref{asm:basic} holds. %
Define $g(\cc) = \min_{\omega\in \C^m} \norm{\sum_{\i=1}^m \omega_\i \dd \ell_\i(\cc)}^2$, where $\C^m$ is the probability simplex on $[m]$. 
Then for the $v_\tim$ and $\lambda_{i,t}$ defined in \eqref{opt: relax} and \eqref{equ: dual}, we have 
$$
\sum_{\i=1}^m \lambda_{i,t} g(\cc_\tim) \leq \max\left(\phi_\tim + c \sqrt{g(\cc_\tim)}, ~0\right). 
$$
\end{lemma}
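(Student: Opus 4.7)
The plan is to extract a single scalar inequality from the KKT optimality conditions of the quadratic program \eqref{opt: relax}--\eqref{equ: dual}, and then exploit the shape of a convex parabola to turn it into the claimed bound. Set $\Lambda \defeq \sum_\i \lambda_{\i,\tim}$ and $u \defeq \sum_\i \lambda_{\i,\tim}\nabla\ell_\i(\cc_\tim)$. The case $\Lambda = 0$ is immediate since the left-hand side vanishes and $\max(\cdot, 0)\geq 0$, so I focus on $\Lambda > 0$.

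In that case I first apply complementary slackness $\lambda_{\i,\tim}(\nabla\ell_\i(\cc_\tim)\tt v_\tim - \phi_\tim) = 0$; summing over $\i$ yields $u\tt v_\tim = \Lambda\phi_\tim$. Combined with the stationarity identity $v_\tim = \nabla F(\cc_\tim) + u$ from Theorem~\ref{thm: sol}, this gives $\|u\|^2 = \Lambda\phi_\tim - u\tt\nabla F(\cc_\tim)$, and Cauchy--Schwarz together with $\|\nabla F\|\leq c$ from Assumption~\ref{asm:basic} produces the key inequality $\|u\|^2 \leq \Lambda\phi_\tim + c\|u\|$. On the other hand, because $\omega_\i \defeq \lambda_{\i,\tim}/\Lambda$ defines a point of the probability simplex $\C^m$, the definition of $g$ forces $\|u\|^2/\Lambda^2 \geq g(\cc_\tim)$, equivalently $\|u\| \geq \Lambda\sqrt{g(\cc_\tim)}$.

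Writing $a = \|u\|$ and $b = \Lambda\sqrt{g(\cc_\tim)}$, I have $a \geq b \geq 0$ and $a^2 - ca \leq \Lambda\phi_\tim$. After dividing by $\Lambda$, the target inequality $\Lambda g(\cc_\tim) \leq \phi_\tim + c\sqrt{g(\cc_\tim)}$ becomes $b^2 - cb \leq \Lambda\phi_\tim$, and I would finish by a case split driven by the shape of the convex parabola $f(x) = x^2 - cx$, which is increasing on $[c/2,\infty)$ and nonpositive on $[0,c]$. If $b \geq c/2$, monotonicity of $f$ on $[c/2,\infty)$ together with $a \geq b$ gives $f(b) \leq f(a) \leq \Lambda\phi_\tim$. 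If $b < c/2$, then $f(b) \leq 0$ and I invoke $\Lambda\phi_\tim \geq 0$, which holds because the algorithm sets $\phi_\tim = \alpha_\tim g(\cc_\tim) \geq 0$ whenever the dual problem is nontrivial. Combining with the trivial bound $\Lambda g(\cc_\tim) \geq 0$ produces the $\max$ form stated in the lemma. The main obstacle is exactly this second subcase: because $f$ is decreasing on $[0, c/2]$, the bound on $a$ does not propagate to $b$ by pure monotonicity, and one must feed in the sign of $\phi_\tim$ coming from the algorithm's choice of control parameter; this is precisely why the $\max$ with $0$ appears on the right-hand side of the lemma's statement.
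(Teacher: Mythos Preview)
Your argument is correct. The opening---summing complementary slackness, substituting $v_\tim = \nabla F(\cc_\tim) + u$, applying Cauchy--Schwarz, and invoking the simplex lower bound $\|u\| \geq \Lambda\sqrt{g(\cc_\tim)}$---matches the paper exactly. The divergence is in the closing scalar step. The paper records the \emph{two-sided} estimate $\bigl|\,\|u\|^2 - \Lambda\phi_\tim\bigr| \leq c\|u\|$ and delegates to an auxiliary Lemma~\ref{lem:xcg}, which squares to $(x-\lambda\phi)^2 \leq c^2 x$ and then asserts that the existence of a feasible $x \geq \lambda^2 g$ forces $f(\lambda^2 g)\leq 0$, whence $|\lambda g - \phi|\leq c\sqrt{g}$. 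You instead retain only the one-sided bound $a^2 - ca \leq \Lambda\phi_\tim$ and argue via monotonicity of $x\mapsto x^2 - cx$ on $[c/2,\infty)$, closing the case $b < c/2$ with the sign condition $\phi_\tim \geq 0$ furnished by Assumption~\ref{asm:basic} and \eqref{equ:phi}. What your route buys is that it makes the reliance on $\phi_\tim \geq 0$ explicit---and this is in fact essential: the paper's implication ``$f(x)\leq 0$ has a solution with $x \geq \lambda^2 g$, hence $f(\lambda^2 g)\leq 0$'' only guarantees $\lambda^2 g \leq y_+$ (the larger root of $f$), not $\lambda^2 g \in [y_-,y_+]$, and indeed Lemma~\ref{lem:xcg} can fail for suitably negative $\phi$. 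So your direct parabola argument is on firmer ground. One cosmetic slip: ``dividing by $\Lambda$'' should read ``multiplying''.
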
 
\begin{proof} 
The slack condition of the constrained optimization in \eqref{opt: relax} says that
\[\lambda_{i,t}\left(
\dd \ell_\i(\cc) \tt v_\tim -
\phi_\tim
\right)=0,~~~~\forall \i\in[m]. 
\]
Sum the equation over $\i\in[m]$ and 
note  
that $v_\tim = \dd F(\cc_\tim) + \sum_{\i=1}^m \lambda_{i,t} \dd \ell_\i(\cc_\tim)$. 
 We get 
\bbb \label{equ:eurd} 
\norm{\sum_{\i=1}^m \lambda_{i,t} \dd\ell_\i(\cc_\tim)}^2 + \left (\sum_{\i=1}^m \lambda_{i,t}  \dd \ell_\i(\cc_\tim) 
\right)\tt \dd F(\cc)  -\sum_{\i=1}^m \lambda_{i,t}  \phi_\tim  = 0. 
\eee 
Define 
\bb 
x_\tim= \norm{\sum_{\i=1}^m \lambda_{i,t} \dd\ell_\i(\cc_\tim)}^2,&&
\bar \lambda_\tim  = \sum_{\i=1}^m \lambda_{i,t}, &&
g_\tim=g(\cc_\tim) = \min_{\omega\in \C^m} \norm{\sum_{\i=1}^m\omega_\i \dd \ell_\i(\cc_\tim)}^2. 
\ee 
Then it is easy to see that $x_\tim \geq \bar \lambda_\tim^2 g_\tim$. 
Using Cauchy-Schwarz inequality, 
$$
\abs{\left (\sum_{\i=1}^m \lambda_{i,t}\dd \ell_\i(\cc)\right )\tt \dd F(\cc_\tim)}
\leq \norm{\dd F(\cc_\tim)}\norm{\sum_{\i=1}^m \lambda_{i,t}\dd \ell_\i(\cc)} \leq c \sqrt{x_\tim}, 
$$
where we used $\norm{\dd F(\cc_\tim)} \leq c$ by Assumption~\ref{asm:basic}. 
Combining this with \eqref{equ:eurd}, we have 
$$
\abs{x_\tim - \bar \lambda_\tim \phi_\tim} \leq c \sqrt{x_\tim}. 
$$
Applying Lemma~\ref{lem:xcg} yields the result. %
\end{proof}

\begin{lemma}\label{lem:xcg}
Assume $\phi \in \RR$, and 
$x,\lambda, c, g \in \RRplus$ are non-negative real numbers and they satisfy 
\bb 
\abs{x - \lambda \phi} \leq c \sqrt{x}, ~~~~~~~~ x\geq \lambda^2 g.
\ee 
Then we have $\lambda g \leq \max(0, \phi + c \sqrt{g}).$
$
$
\end{lemma}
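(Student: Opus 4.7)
The plan is to reduce the hypotheses to a single quadratic inequality in $\sqrt{x}$, extract an upper bound on $\sqrt{x}$, combine it with the lower bound $\sqrt{x}\geq\lambda\sqrt{g}$ coming from $x\geq\lambda^{2}g$, and then square to recover a linear bound on $\lambda g$. The $\max(0,\,\cdot\,)$ on the right-hand side appears because the squaring step is only valid in one of two regimes, and in the complementary regime the claim must be obtained separately.

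First I would dispose of the trivial case $\lambda=0$: then $\lambda g=0$ and the inequality holds regardless of the sign of $\phi+c\sqrt{g}$. Henceforth assume $\lambda>0$. The two-sided bound $|x-\lambda\phi|\leq c\sqrt{x}$ gives in particular $x-\lambda\phi\leq c\sqrt{x}$, which I rewrite with $y\defeq\sqrt{x}\geq 0$ as
\bb
y^{2}-cy-\lambda\phi\leq 0.
\ee
Because a feasible $y$ exists, the discriminant must satisfy $c^{2}+4\lambda\phi\geq 0$ (otherwise the quadratic would be strictly positive). Solving for $y$ then yields the upper bound
\bb
\sqrt{x}\;\leq\;\tfrac{1}{2}\bigl(c+\sqrt{c^{2}+4\lambda\phi}\bigr).
\ee

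Next I combine this with the lower bound $\sqrt{x}\geq\lambda\sqrt{g}$ supplied by the hypothesis $x\geq\lambda^{2}g$, which gives
\bb
2\lambda\sqrt{g}-c\;\leq\;\sqrt{c^{2}+4\lambda\phi}.
\ee
Here I branch. In the regime $2\lambda\sqrt{g}>c$, both sides are nonnegative, so I may square and simplify to obtain $4\lambda^{2}g-4c\lambda\sqrt{g}\leq 4\lambda\phi$; dividing by $4\lambda>0$ yields $\lambda g-c\sqrt{g}\leq\phi$, i.e.\ $\lambda g\leq\phi+c\sqrt{g}$, and since $\lambda g\geq 0$ this is automatically at most $\max(0,\phi+c\sqrt{g})$. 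In the complementary regime $2\lambda\sqrt{g}\leq c$ the displayed inequality is vacuous, but this same condition gives $\lambda g\leq\tfrac{c}{2}\sqrt{g}$ directly; I would then close the argument by noting that this bound together with the sign information on $\phi$ extracted from $\lambda\phi\geq -c^{2}/4$ is compatible with the right-hand side $\max(0,\phi+c\sqrt{g})$.

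The main obstacle is precisely the small-$\lambda\sqrt{g}$ branch: squaring the key inequality loses information, so one cannot extract the linear bound $\lambda g\leq \phi+c\sqrt{g}$ directly, and this is exactly why the $\max$ with $0$ is needed on the right. Once these two cases are merged, applying the lemma inside Lemma~\ref{lem:cgbound} with $\phi=\phi_\tim$, $x=x_\tim$, $\lambda=\bar\lambda_\tim$, and $g=g_\tim$ yields the bound used downstream in Theorem~\ref{thm:odef}, where the nonnegativity of $\phi_\tim=\alpha_\tim g(\cc_\tim)$ makes the $\max$ inactive and the simple linear estimate $\sum_i\lambda_{i,t}g(\cc_\tim)\leq\alpha_\tim g(\cc_\tim)+c\sqrt{g(\cc_\tim)}$ suffices.
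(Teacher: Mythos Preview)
Your Case~2 is not actually closed, and it cannot be closed: the lemma is false as stated for general $\phi\in\RR$. Take $c=g=1$, $\phi=-2$, $\lambda=\tfrac18$, $x=\tfrac14$. Then $|x-\lambda\phi|=\bigl|\tfrac14+\tfrac14\bigr|=\tfrac12=c\sqrt{x}$ and $x=\tfrac14\geq \tfrac{1}{64}=\lambda^2 g$, so both hypotheses hold; yet $\lambda g=\tfrac18$ while $\max(0,\phi+c\sqrt{g})=\max(0,-1)=0$. The obstruction sits exactly where you wave your hands: from $2\lambda\sqrt{g}\leq c$ you can extract only $\lambda g\leq\tfrac{c}{2}\sqrt{g}$, and from the discriminant only $\lambda\phi\geq-\tfrac{c^{2}}{4}$; neither of these lets you control $\lambda g$ by $\phi+c\sqrt{g}$ when $\phi$ is sufficiently negative.

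The paper's own argument makes the matching error. It claims that the existence of an $x$ with $(x-\lambda\phi)^2\leq c^2x$ and $x\geq\lambda^2 g$ forces $f(\lambda^2 g)\leq 0$, where $f(x)=(x-\lambda\phi)^2-c^2x$. But $f$ is an upward parabola, nonpositive only on an interval $[x_-,x_+]$, and the correct consequence of feasibility is merely $\lambda^2 g\leq x_+$, not $\lambda^2 g\in[x_-,x_+]$. In the counterexample one computes $f(x)=(x-\tfrac14)^2$, so $x_-=x_+=\tfrac14$ while $\lambda^2 g=\tfrac{1}{64}<x_-$, and indeed $f(\lambda^2 g)>0$ even though a feasible $x$ exists.

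Both your route and the paper's become valid under the extra hypothesis $\phi\geq 0$, which is the only regime ever invoked: in Lemma~\ref{lem:cgbound} the present lemma is applied with $\phi=\phi_t=\alpha_t g(\theta_t)\geq 0$, and the $\max$ with $0$ is inactive anyway. With $\phi\geq 0$ your Case~2 closes trivially, since $\lambda g\leq\tfrac{c}{2}\sqrt{g}\leq c\sqrt{g}\leq\phi+c\sqrt{g}$.
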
 
\begin{proof}
Square the first equation, we get 
$$f(x):= (x-\lambda \phi )^2 - c^2 x \leq 0,$$
where $f$ is a quadratic function. 
To ensure that $f(x) \leq 0$ has a solution that satisfies $ x \ge \lambda^2 g$, we need to have $f(\lambda^2 g) \leq 0$, that is,  
$$
f(\lambda^2g) = (\lambda^2g - \lambda \phi )^2 - c^2 \lambda ^2 g \leq 0.
$$
This can hold under two cases: 

Case 1: $\lambda =0;$

Case 2: $|\lambda g -\phi|\leq c \sqrt{g}$, and hence 
$\phi - c \sqrt{g} \leq \lambda g \leq \phi + c \sqrt{g}
$. 

Under both case, we have 
$$
\lambda g \leq \max(0, \phi + c \sqrt{g}). 
$$
\end{proof}

\begin{lemma}\label{lem:alphako}
Let $\{\alpha_\tim\colon t\in \RRplus\} \subseteq \RRplus$ 
be a non-negative sequence with 
$
A:=\left(\int_0^\infty \alpha_\tim^\gamma \df t \right)^{1/\gamma}  <\infty$, where $\gamma \geq 1$,  and $B = \sup_t \alpha_t <\infty$. 
Then we have 
$$
\frac{1}{t} \int_{0}^t \left (\alpha_s^2 +\alpha_s\right) \df s \leq  (B+1) A t^{-1/\gamma}. 
$$
\end{lemma}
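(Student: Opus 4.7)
The plan is to reduce the integrand $\alpha_s^2 + \alpha_s$ to a single $\alpha_s$ term (absorbing the square into the constant $B$), and then apply Hölder's inequality to bound $\int_0^t \alpha_s \df s$ in terms of the $L^\gamma$-type quantity $A$.

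First I would use the uniform bound $\alpha_s \leq B$ to observe that $\alpha_s^2 \leq B \alpha_s$, hence
\begin{equation*}
\int_0^t \bigl(\alpha_s^2 + \alpha_s\bigr) \df s \;\leq\; (B+1) \int_0^t \alpha_s \df s.
\end{equation*}
Next, I would apply Hölder's inequality with conjugate exponents $\gamma$ and $\gamma/(\gamma-1)$ (interpreting the case $\gamma=1$ trivially, since then $\int_0^t \alpha_s \df s \leq A$ and $t^{(\gamma-1)/\gamma} = 1$) to obtain
\begin{equation*}
\int_0^t \alpha_s \df s \;\leq\; \left(\int_0^t \alpha_s^\gamma \df s\right)^{1/\gamma}\!\!\left(\int_0^t 1 \df s\right)^{(\gamma-1)/\gamma} \!\leq\; A \, t^{(\gamma-1)/\gamma},
\end{equation*}
where the last inequality uses $\int_0^t \alpha_s^\gamma \df s \leq \int_0^\infty \alpha_s^\gamma \df s = A^\gamma$.

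Combining these two steps and dividing by $t$ yields
\begin{equation*}
\frac{1}{t}\int_0^t \bigl(\alpha_s^2 + \alpha_s\bigr) \df s \;\leq\; (B+1)\, A\, t^{(\gamma-1)/\gamma - 1} \;=\; (B+1)\, A\, t^{-1/\gamma},
\end{equation*}
which is the claimed bound. There is no real obstacle here: the argument is a two-line combination of the trivial domination $\alpha_s^2 \leq B\alpha_s$ with Hölder's inequality, and the only thing to watch is the boundary case $\gamma=1$, which is handled directly by the assumption that the full integral $A^\gamma$ is finite.
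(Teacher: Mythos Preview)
Your proof is correct and follows essentially the same approach as the paper: bound $\alpha_s^2 \leq B\alpha_s$ to reduce to $(B+1)\int_0^t \alpha_s\,\df s$, then apply H\"older with exponents $\gamma$ and $\gamma/(\gamma-1)$ to get $\int_0^t \alpha_s\,\df s \leq A\,t^{1-1/\gamma}$. Your explicit handling of the $\gamma=1$ boundary case is a nice addition that the paper leaves implicit.
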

\begin{proof}
Let $\eta = \frac{\gamma}{\gamma-1}$, so that $1/ \eta  + 1/\gamma= 1$. We have by Holder's inequality, 
\bb 
\int_{0}^t\alpha_s \df s 
\leq \left( \int_{0}^t \alpha_s^\gamma \df s\right )^{1/\gamma} \left (\int_{0}^t 1^\eta\df s \right )^{1/\eta} 
\leq A t^{1/\eta}= A t^{1-1/\gamma}. 
\ee 
and hence 
$$
\frac{1}{t}\int_{0}^t\left (\alpha_s^2  + \alpha_s\right )\df s  
\leq \frac{B+1}{t} \int_0^t\alpha_s \df s 
\leq  (B+1) A t^{-1/\gamma}. 
$$
\end{proof}

\clearpage

\section{Practical Implementation} \label{sec: practice}
\paragraph{Hyper-parameters} 
Our algorithm introduces two hyperparameters $\{\alpha_t\}$ and $\ep$ over vanilla gradient descent. We use constant sequence $\alpha_t=\alpha $ and we take $\alpha = 0.5$ unless otherwise specified. 
We choose $\ep$ by $\ep = \gamma   \ep_0$,  
where $\ep_0 $ is an exponentially discounted average of 
$\frac{1}{m}\sum_{\i=1}^{m}\left\Vert \nabla\ell_\i(\cc_t)\right\Vert^2$ over the trajectory so that it automatically scales with the magnitude of the gradients of the problem at hand. In the experiments of this paper, we simply fix $\gamma=0.1$ unless specified.

\paragraph{Solving the Dual Problem}
Our method requires to calculate 
 $\{\lambda_{i,t}\}_{t=1}^m$ with the dual optimization problem in \eqref{equ: dual}, which can be solved with any off-the-shelf convex quadratic programming tool. 
In this work, we use a very simple 
projected gradient descent 
to approximately solve \eqref{equ: dual}.  
We initialize $\{\lambda_{i,t}\}_{t=1}^m$ with a zero vector and terminate when the difference between the last two iterations is smaller than a threshold or the algorithm reaches the maximum number of iterations (we use 100 in all experiments). 

\section{Experiments}
\subsection{Finding Preferred Pareto Models}
\subsubsection{Ratio-based Criterion} \label{appendix_sec: ratio-based preference}
The non-uniformity score from \citep{mahapatra2020multi} that we 
use in Figure~\ref{fig: epo recover} is defined as 
\begin{align} \label{equ: nuf}
{F_\text{NU}(\ensuremath{\th})}=\sum_{t=1}^{m}\hat{\ell}_{t}(\th)\log\left (\frac{\hat{\ell}_{t}(\th)}{1/m}\right ),~~~~~\ \ \hat{\ell}_{t}(\th)=\frac{r_{t}\ell_{t}(\th)}{\sum_{s\in[m]}r_{s}\ell_{s}(\th)}. 
\end{align}

We fix the other experiment settings the same as \citet{mahapatra2020multi} and use $\gamma=0.01$ and $\alpha=0.25$ for this experiment reported in the main text. We defer the ablation studies on the hyper-parameter $\alpha$ and $\gamma$ to Section \ref{appendix_sec: dynamics}.

\subsubsection{ZDT2-Variant} \label{appendix_sec: zdt}
We consider the ZDT2-Variant example used in \citet{ma2020efficient} with the same experiment setting, in which the Pareto set is a cylindrical surface, making the problem more challenging. We consider the same criteria, e.g. weighted distance and complex cosine used in the main context with different choices of $r_1 = [0.2, 0.4, 0.6, 0.8]$. We use the default hyper-parameter set up, choosing $\alpha=0.5$ and $r=0.1$. For complex cosine, we use MGD updating for the first 150 iterations. Figure \ref{fig: zdt_general} shows the trajectories, demonstrating that PNG works pretty well for the more challenging ZDT2-Variant tasks.

\begin{figure}
\begin{centering}
\includegraphics[scale=0.4]{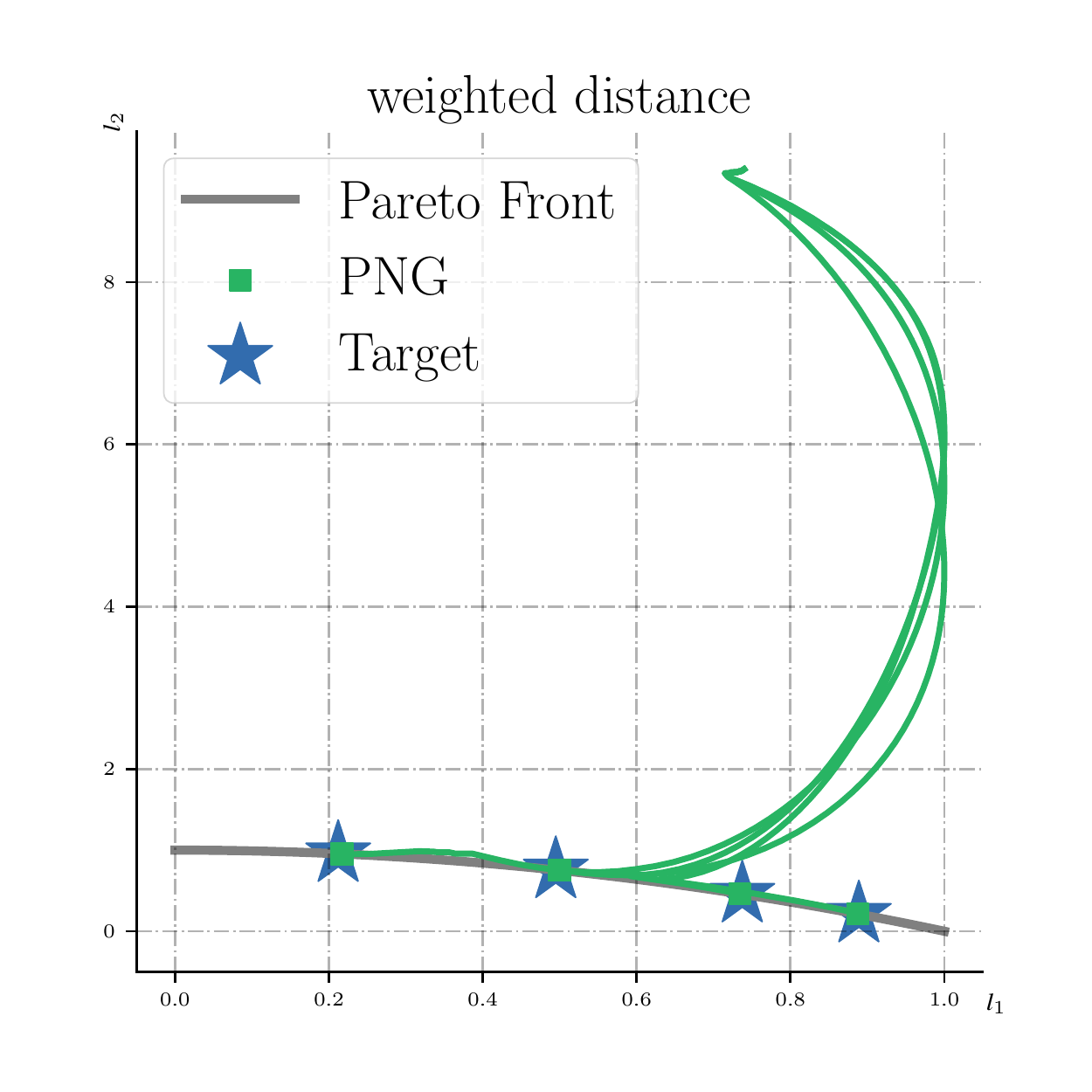}
\includegraphics[scale=0.4]{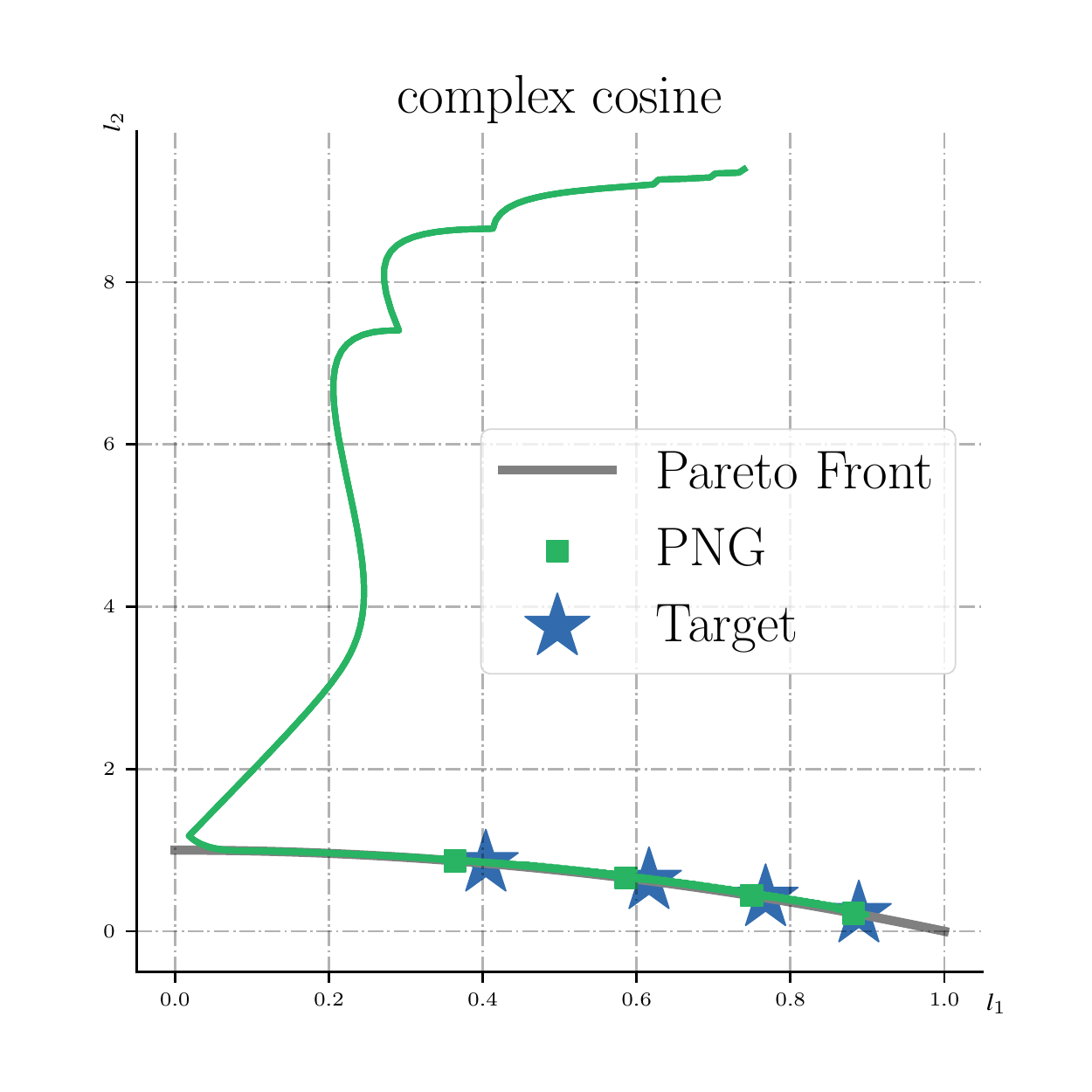}
\caption{Trajectories of solving OPT-in-Pareto with weighted distance and complex cosine as criterion using PNG. The green dots are the final converged models. PNG is able to successfully locate the correct models in the Pareto set.} \label{fig: zdt_general}
\end{centering}
\end{figure}

\subsubsection{General Criteria: Three-task learning on the NYUv2 Dataset} \label{appendix_sec: mtan}
We show that PNG is able to handle large-scale multitask learning problems by deploying it on a three-task learning problem (segmentation, depth estimation, and surface normal prediction) on NYUv2 dataset \citep{silberman2012indoor}. The main goal of this experiment is to show that: 1. PNG is able to handle OPT-in-Pareto in a large-scale neural network; 2. With a proper design of criteria, PNG enables to do targeted fine-tuning that pushes the model to move towards a certain direction. We consider the same training protocol as \citet{liu2019end} and use the MTAN network architecture. Start with a model trained with equally weighted linear scalarization and our goal is to further improve the model’s performance on segmentation and surface normal estimation while allowing some sacrifice on depth estimation. This can be achieved by many different choices of criterion and in this experiment, we consider the following design: 
$
F(\cc)= (\ell_{\text{seg}}(\cc) \times \ell_{\text{surface}}(\cc)) / (0.001 + \ell_{\text{depth}}(\cc)).
$
Here $\ell_{\text{seg}}$, $\ell_{\text{surface}}$ and $\ell_{\text{depth}}$ are the loss functions for segmentation, surface normal prediction and depth estimation, respectively. The constant 0.001 in the denominator is for numeric stability. We point out that our design of criterion is a simple heuristic and might not be an optimal choice and the key question we study here is to verify the functionality of the proposed PNG. As suggested by the open-source repository of \citet{liu2019end}, we reproduce the result based on the provided configuration. To show that PNG is able to move the model along the Pareto front, we show the evolution of the criterion function and the norm of the MGD gradient during the training in Figure \ref{fig: mtan_traj}. As we can see, PNG effectively decreases the value of criterion function while the norm of MGD gradient remains the same. This demonstrates that PNG is able to minimize the criterion by searching the model in the Pareto set. Table \ref{tbl: mtan} compares the performances on the three tasks using standard training and PNG, showing that PNG is able to improve the model’s performance on segmentation and surface normal prediction tasks while satisfying a bit of the performance in depth estimation based on the criterion.

\begin{figure}
\begin{centering}
\includegraphics[scale=0.4]{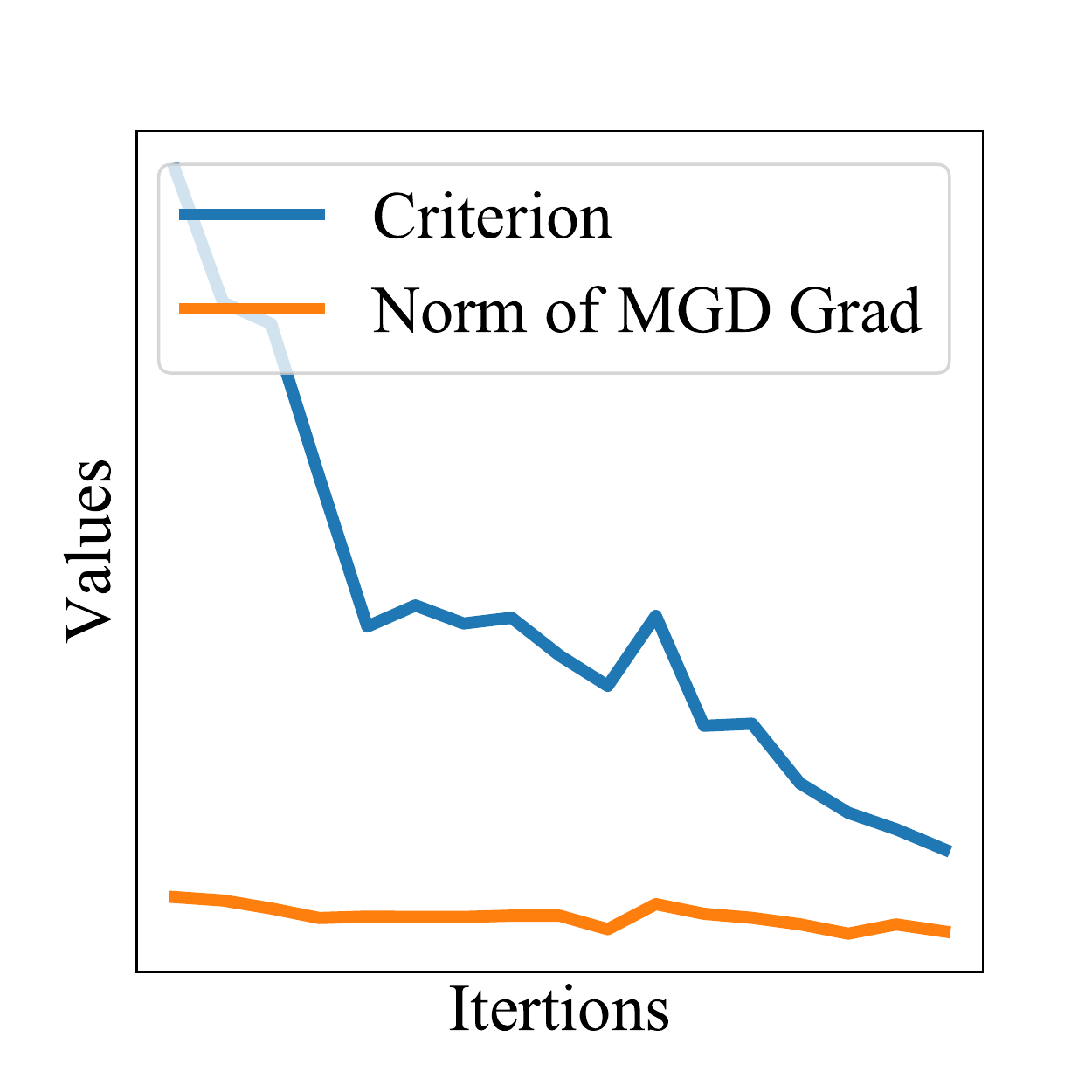}
\par\end{centering}
\caption{The evolution of Criterion $F$ and the norm of MGD gradient when trained using PNG on NYUv2 dataset with MTAN network. PNG effectively decreases the criterion while ensuring the model is within the Pareto set, since the norm of MGD gradient remains unchanged.} \label{fig: mtan_traj}
\end{figure}

\subsection{Finding Diverse Pareto Models} \label{appendix_sec: pareto approximation}
\subsubsection{Experiment Details} \label{appendix_sec: pareto approximation exp}
We train the model for 100 epochs using Adam optimizer with batch size 256 and 0.001 learning rate. To encourage diversity of the models, following the setting in \citet{mahapatra2020multi}, we use equally distributed preference vectors for linear scalarization and EPO. Note that the stochasticity of using mini-batches is able to improve the performance of Pareto approximation for free by also using the intermediate checkpoints to approximate $\P$. To fully exploit this advantage, for all the methods, we collect checkpoints every epoch to approximate $\P$, starting from epoch 60.
\subsubsection{Evaluation Metric Details} \label{appendix_sec: pareto approximation metric}
We introduce the definition of the used metric for evaluation. Given a set $\hat{\P} = \{\theta_1,\ldots, \cc_N\}$ that we use to approximate  $\P$, its IGD+ score is defined as: 
\[
\text{IGD$_+$}(\hat{\P})=\int_{\P^*}
q(\th,\hat{\P})d\mu(\th),\ \ \ \  q(\th,\hat{\P})=\min_{\hat{\th}\in\hat{\P}}\left\Vert \left(\L(\hat{\th})-\L(\th)\right)_{+}\right\Vert,
\]
where $\mu$ is some base measure that measures the importance of
$\th\in \P$ and $(t)_+\defeq \max(t,0)$, applied on each element of a vector. Intuitively, for each $\th$, we find a
nearest $\hat{\th}\in\hat{\P}$ that approximates $\th$ best. Here
the $(\cdot)_{+}$ is applied as we only care the tasks that $\hat{\th}$
is worse than $\th$. In practice, a common choice of $\mu$ can be
a uniform counting measure with uniformly sampled (or selected) models
from $\P$. In our experiments, since we can not sample models from $\P$, we approximate $\P$ by combining $\hat{\P}$ from all the methods, {\color{black} i.e., $\P\approx\cup_{m\in\text{\{Linear,MGD,EPO,PNG\}}}\hat{\P}_{m}$, where $\hat{P}_m$ is the approximation set produced by algorithm $m$.}

This approximation might not be accurate but is sufficient to compare the different methods,

The Hypervolume score of $\hat \P$, w.r.t. a reference point $\L^r\in \RRplus^m$,  is defined as 
\[
\text{HV}(\hat{\P}) = \mu\left(\left\{ \L=[\ell_{1},...,\ell_{m}]\in\mathbb{R}^{m}\mid\exists\th\in\hat{\P},\ \text{s.t.}\ \ell_{t}(\th)\le\ell_{t}\le\ell_{t}^{r}\ \forall t\in[m]\right\} \right),
\]
where $\mu$ is again some measure. We use $\L^r=[0.6, 0.6]$ for calculating the Hypervolume based on loss and set $\mu$ to be the common Lebesgue measure. Here we choose 0.6 as we observe that the losses of the two tasks are higher than 0.6 and 0.6 is roughly the worst case. When calculating Hypervolume based on accuracy, we simply flip the sign.

\begin{table}
\begin{centering}
\begin{tabular}{c|ccccccccc}
\toprule
\multirow{3}{*}{\makecell[c]{Algorithm}} & \multicolumn{2}{c}{Segmentation} & \multicolumn{2}{c}{Depth} & \multicolumn{5}{c}{Surface Normal}\tabularnewline
\cline{2-10} \cline{3-10} \cline{4-10} \cline{5-10} \cline{6-10} \cline{7-10} \cline{8-10} \cline{9-10} \cline{10-10} 
 & \multicolumn{2}{c}{(Higher Better)} & \multicolumn{2}{c}{(Lower Better)} & \multicolumn{2}{c}{\makecell[c]{Angle Distance \\ (Lower Better)}} & \multicolumn{3}{c}{Within $t^{\circ}$}\tabularnewline
 & mIoU & Pix Acc & Abs Err & Rel Err & Mean & Median & 11.25 & 22.5 & 30\tabularnewline
\hline 
Standard & 27.09 & 56.36 & 0.6143 & 0.2618 & 31.46 & 27.37 & 19.51 & 41.71 & 54.61\tabularnewline
PNG & 28.23 & 56.66 & 0.6161 & 0.2632 & 31.06 & 26.50 & 21.06 & 43.41 & 55.93\tabularnewline
\bottomrule
\end{tabular}\caption{Comparing the multitask performance of standard training using linear scalarization with equally weighted losses and the targeted fine-tuning based on PNG.} \label{tbl: mtan}
\par\end{centering}
\end{table}

\subsubsection{Ablation Study} \label{appendix_sec: pareto approximation abl}
We conduct ablation study to understand the effect of $\alpha$ and $\gamma$ using the Pareto approximation task on Multi-Mnist. We compare PNG with $\alpha=0.25, 0.5, 0.75$ and $\gamma=0.01, 0.1, 0.25$. Figure \ref{table: ablation} summarizes the result. Overall, we observe that PNG is not sensitive to the choice of hyper-parameter.

\begin{table}
\begin{centering}
\begin{tabular}{c|c|cccc}
\toprule
\multicolumn{2}{c|}{} & \multicolumn{2}{c}{Loss} & \multicolumn{2}{c}{Acc}\tabularnewline
\multicolumn{1}{c}{} &  & Hv$\uparrow$ ($10^{-2}$) & IGD$\downarrow$ ($10^{-2}$) & Hv$\uparrow$ ($10^{-2}$) & IGD$\downarrow$ ($10^{-2}$)\tabularnewline
\hline 
\multirow{3}{*}{$\gamma=0.1$} & $\alpha=0.25$ & $7.89\pm0.11$ & $0.041\pm0.012$ & $9.39\pm0.038$ & $0.0056\pm0.002$\tabularnewline
 & $\alpha=0.5$ & $7.86\pm0.12$ & $0.043\pm0.012$ & $9.39\pm0.038$ & $0.0056\pm0.002$\tabularnewline
 & $\alpha=0.75$ & $7.84\pm0.11$ & $0.045\pm0.013$ & $9.38\pm0.037$ & $0.0057\pm0.002$\tabularnewline
\hline 
\multirow{3}{*}{$\alpha=0.5$} & $\gamma=0.01$ & $7.86\pm0.12$ & $0.042\pm0.012$ & $9.39\pm0.038$ & $0.0056\pm0.002$\tabularnewline
 & $\gamma=0.1$ & $7.86\pm0.12$ & $0.043\pm0.012$ & $9.39\pm0.038$ & $0.0056\pm0.002$\tabularnewline
 & $\gamma=0.25$ & $7.85\pm0.11$ & $0.042\pm0.012$ & $9.39\pm0.036$ & $0.0056\pm0.002$\tabularnewline
\bottomrule
\end{tabular}
\par\end{centering}
\caption{Ablation study based on Multi-Mnist dataset with different choice of $\alpha$ and $\gamma$.} \label{table: ablation}
\end{table}

\subsubsection{Comparing with the Second Order Approach} \label{appendix_sec: compare second order}
We give a discussion on comparing our approach with the second order approaches proposed by \citet{ma2020efficient}. In terms of algorithm, \citet{ma2020efficient} is a local expansion approach. To apply \citet{ma2020efficient}, in the first stage, we need to start with several well distributed models (i.e., the ones obtained by linear scalarization with different preference weights) and \citet{ma2020efficient} is only applied in the second stage to find the neighborhood of each model. The performance gain comes from the local neighbor search of each model (i.e. the second stage).

In comparison, PNG with energy distance is a global search approach. It improves the well-distributedness of models in the first stage (i.e. it’s a better approach than simply using linear scalarization with different weights). And thus the performance gain comes from the first stage. Notice that we can also apply \citet{ma2020efficient} to PNG with energy distance to add extra local search to further improve the approximation.

In terms of run time comparison. We compare the wall clock run time of each step of updating the 5 models using PNG and the second order approach in \citet{ma2020efficient}. We calculate the run time based on the multi-MNIST dataset using the average of 100 steps. PNG uses 0.3s for each step while \citet{ma2020efficient} uses 16.8s. PNG is \emph{56x} faster than the second order approach. And we further argue that, based on time complexity theory, the gap will be even larger when the size of the network increases.

\subsection{Trajectory Visualization with Different Hyper-parameters} \label{appendix_sec: dynamics}
We give visualization on the PNG trajectory when using different hyper-parameters. We reuse synthetic example introduced in Section \ref{sec: subset application} for studying the hyper-parameters $\alpha$ and $\gamma$. We fix $\alpha=0.25$ and vary $\gamma = 0.1, 0.05, 0.01, 0.1$; and fix $\gamma=0.01$ and vary $\alpha=0.1, 0.25, 0.5, 0.75$. Figure \ref{fig: epo recover ablation} plots the trajectories. As we can see, when $\gamma$ is properly chosen, with different $\alpha$, PNG finds the correct models with different trajectories. Different $\alpha$ determines the algorithm's behavior of balancing the descent of task losses or criterion objectives. On the other hand, with too large $\gamma$, the algorithm fails to find a model that is close to $\P^*$, which is expected.

\begin{figure}
\begin{centering}
\includegraphics[scale=0.32]{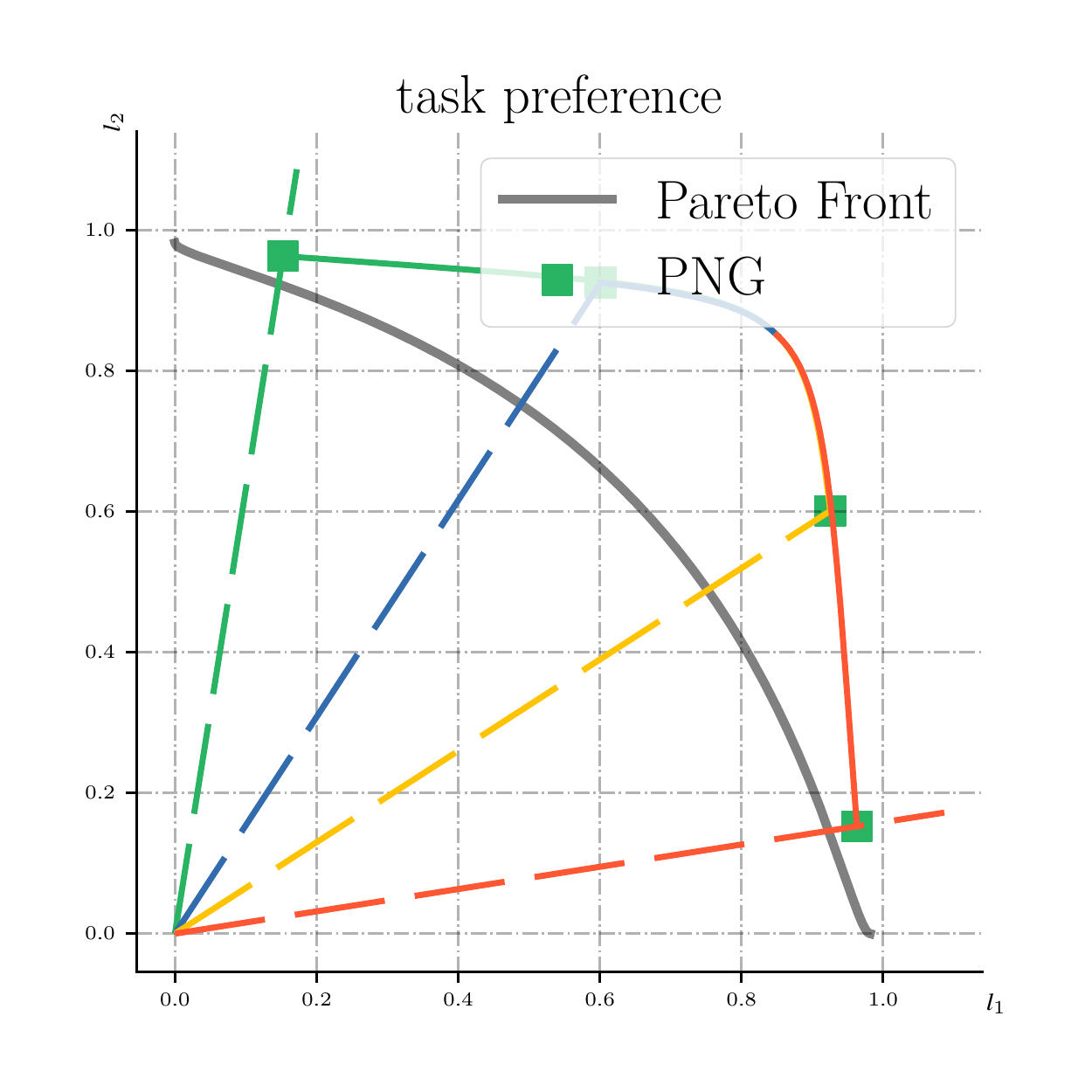}\hspace{-0.6cm}
\includegraphics[scale=0.32]{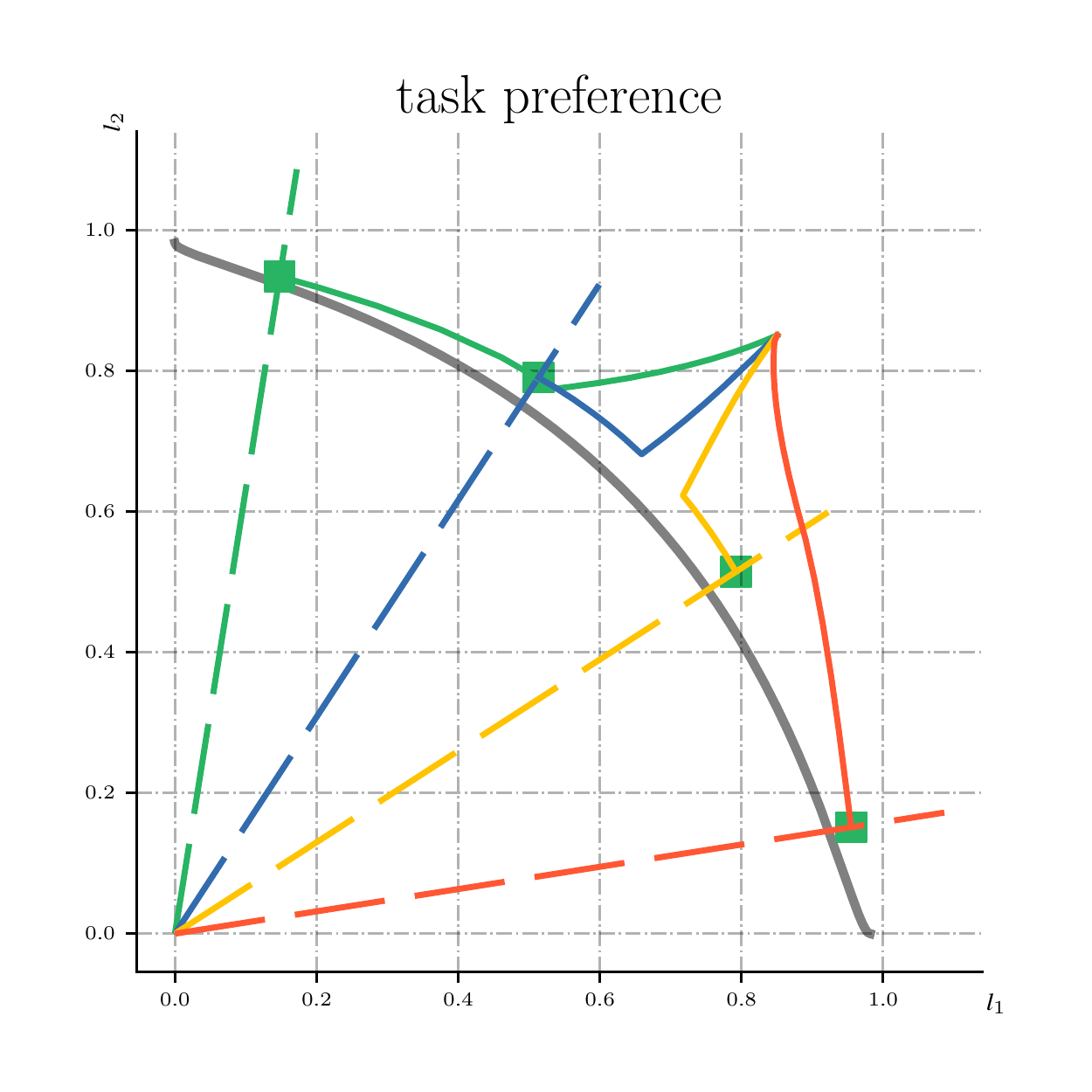}\hspace{-0.6cm}
\includegraphics[scale=0.32]{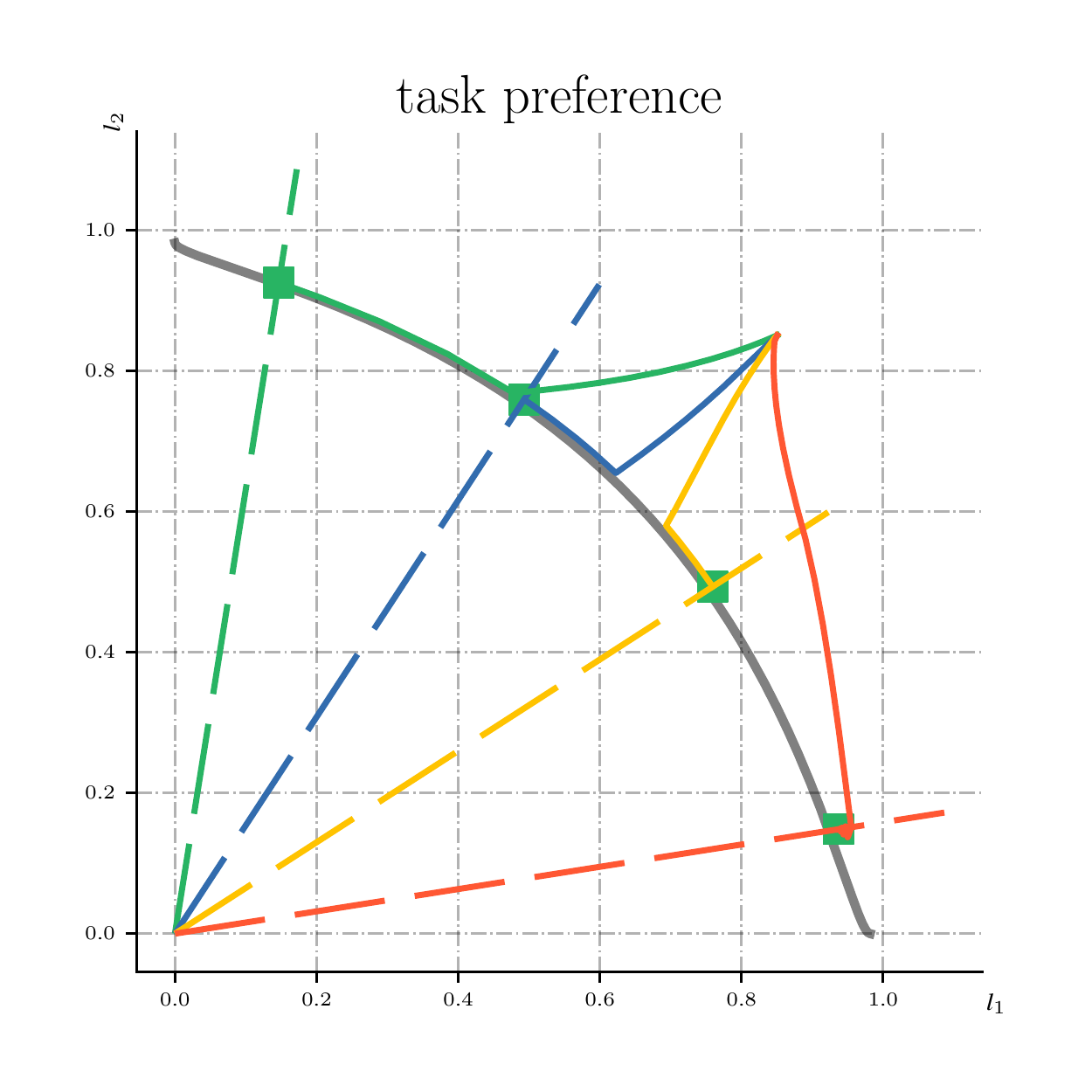}\hspace{-0.6cm}
\includegraphics[scale=0.32]{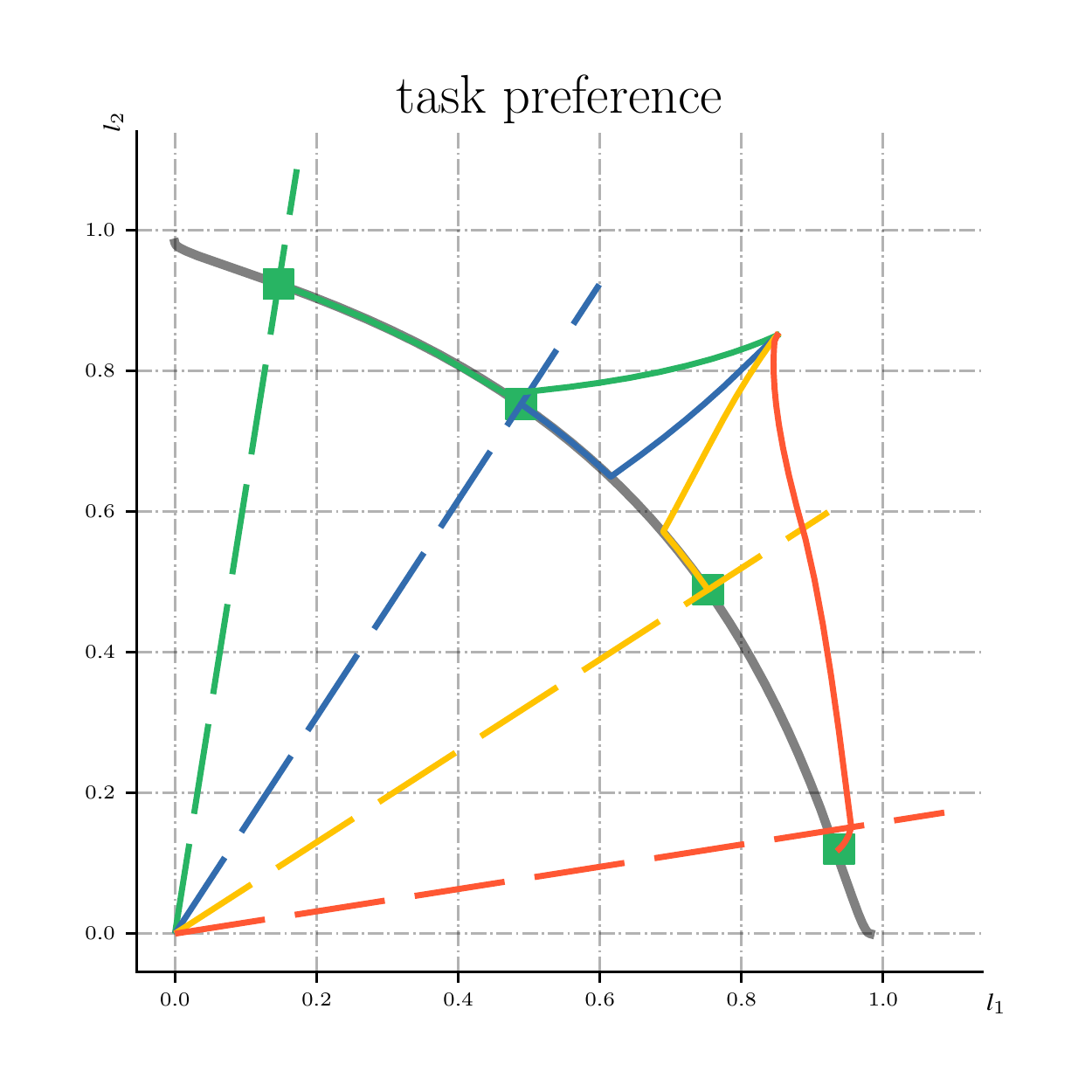}

\includegraphics[scale=0.32]{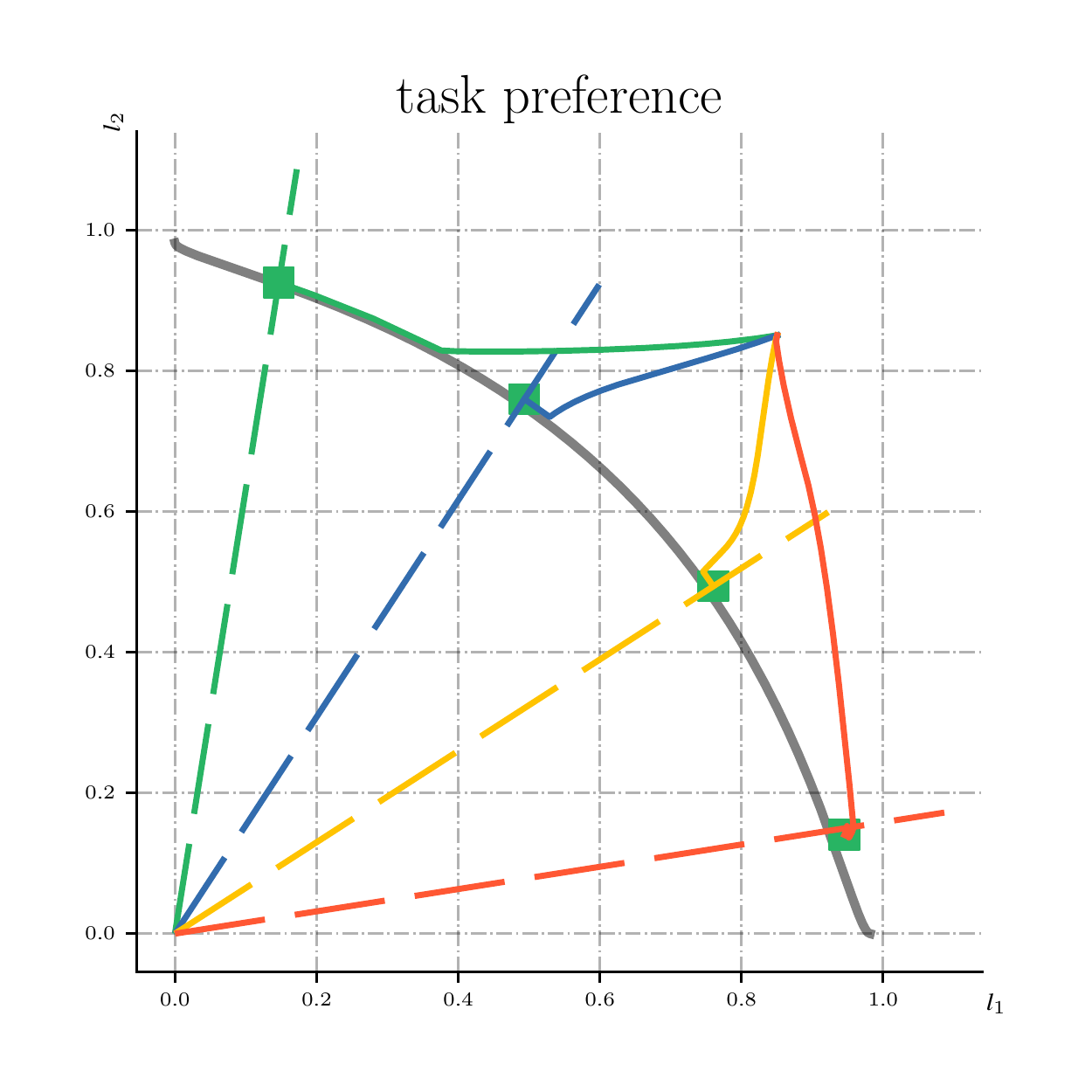}\hspace{-0.6cm}
\includegraphics[scale=0.32]{fig/ablation/NPOalpha_0.25thre_0.01_recover.pdf}\hspace{-0.6cm}
\includegraphics[scale=0.32]{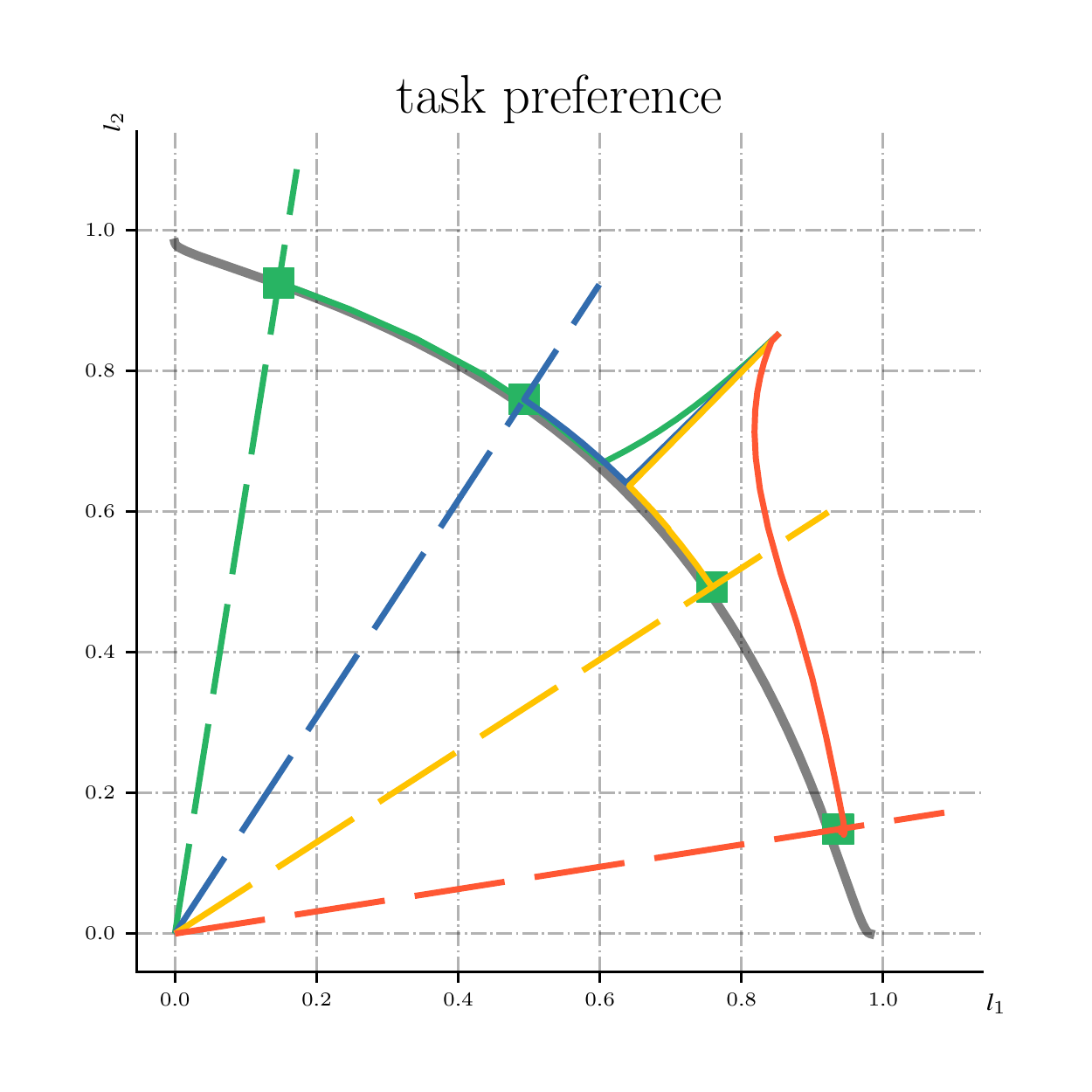}\hspace{-0.6cm}
\includegraphics[scale=0.32]{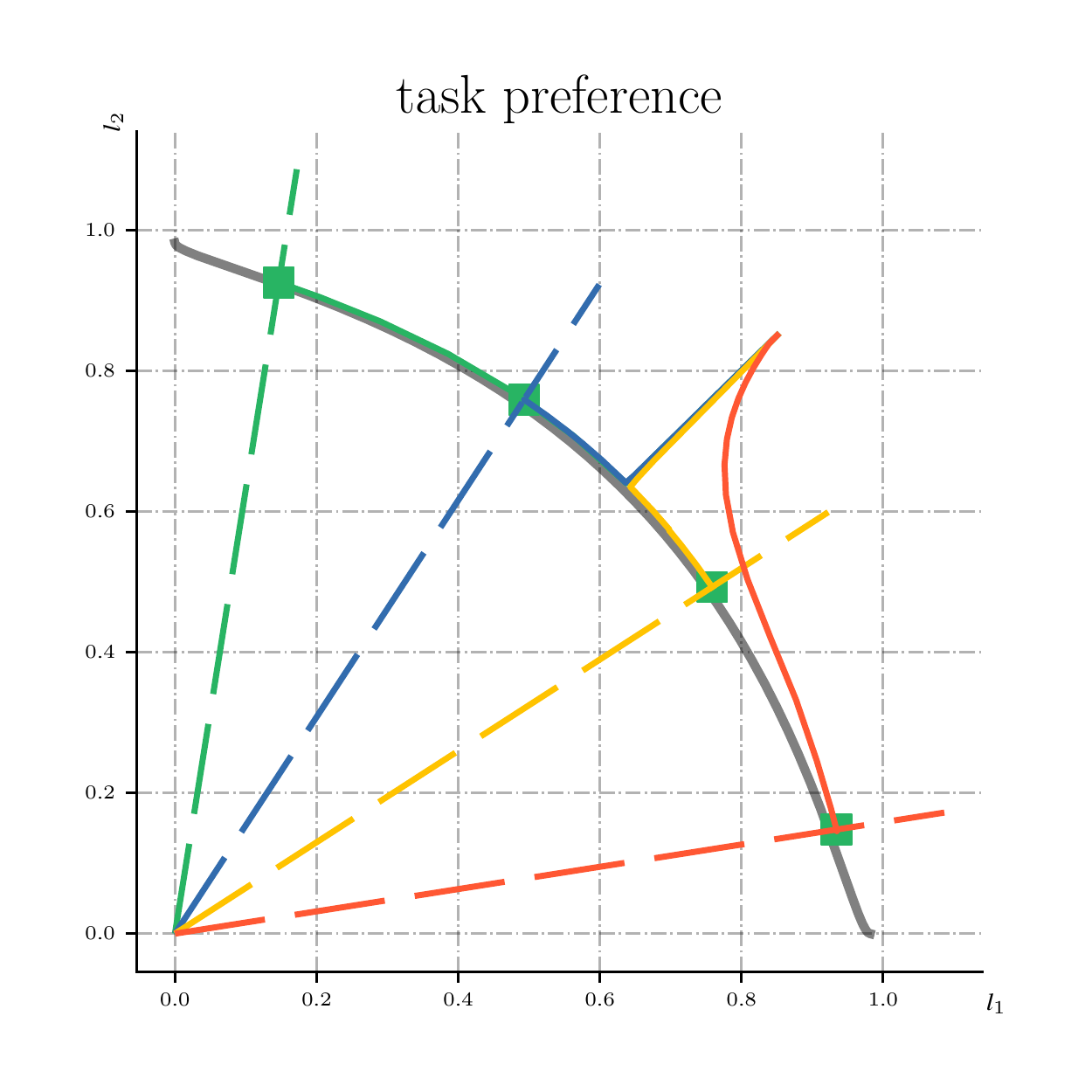}
\par\end{centering}
\caption{Ablation study on OPT-in-Pareto with different ratio constraint of objectives. Upper row, from left to right: fixing $\alpha=0.25$, $\gamma=0.1, 0.05, 0.01, 0.001$; Lower row, from left to right: fixing $\gamma=0.01$, $\alpha=0.1, 0.25, 0.5, 0.75$. By comparing the figures in the first row, we find that choosing a too large $\gamma$ make the final converged model be far away from the Pareto set, which is as expected. By comparing the figures in the second row, we find that changing $\alpha$ make PNG give different priority in making Pareto improvement or descent on $F$. When $\alpha$ is larger (the right figures), PNG will first move the model to Pareto set and start to decrease $F$ after that.} \label{fig: epo recover ablation}
\end{figure}

\subsection{Improving Multitask Based Domain Generalization} \label{appendix_sec: dg}

We argue that many other deep learning problems also have the structure
of multitask learning when multiple losses presents and thus optimization
techniques in multitask learning can also be applied to those domains. In this paper we consider the JiGen \citep{carlucci2019domain}.
JiGen learns
a model that can be generalized to unseen domain by minimizing a standard
cross-entropy loss $\ell_{\text{class}}$ for classification and an
unsupervised loss $\ell_{\text{jig}}$ based on Jigsaw Puzzles: 
\[
\ell(\th)=(1-\omega)\ell_{\text{class}}(\th)+\omega\ell_{\text{jig}}(\th).
\]
The ratio between two losses, i.e. $\omega$, is important to the final performance of the model and requires a careful grid search. Notice that JiGen is essentially searching for a model on the Pareto front using the linear scalarization. Instead of using a fixed linear scalarization to learn a model, one natural questions is that whether it is possible to design a mechanism that dynamically adjusts the ratio of the losses so that we can achieve to learn a better model.

We give a case study here. Motivated by the
adversarial feature learning \citep{JMLR:v17:15-239}, we propose to improve JiGen such that the latent feature representations of the two tasks are well aligned. Specifically, suppose that {\color{black}$\Phi_{\text{class}}(\th)=\{\phi_{\text{class}}(x_{i},\th)\}_{i=1}^{n}$ and $\Phi_{\text{jig}}(\th)=\{\phi_{\text{jig}}(x_{i},\th)\}_{i=1}^{n}$
is the distribution of latent feature representation of the two tasks, where $x_i$ is the $i$-th training data.}
We consider $F_{\text{PD}}$ as some probability metric that measures
the distance between two distributions, we consider the following
problem: 
{\color{black}
\[
\min_{\th\in \P^*}F_{\text{PD}}[\Phi_{\text{class}}(\th),\Phi_{\text{jig}}(\th)].
\]
}
With $\text{PD}$ as the criterion function, our algorithm automatically reweights the ratio of the two tasks such that their latent space is well aligned.

\begin{table}[t]
\begin{centering}
\scalebox{0.92}{
\begin{tabular}{c|cccc|c}
\toprule
Method & Art paint & Cartoon & Sketches & Photo & Avg\tabularnewline
\hline 
\multicolumn{6}{c}{AlexNet}\tabularnewline
\hline 
TF & $0.6268$ & $0.6697$ & $0.5751$ & $0.8950$ & $0.6921$\tabularnewline
CIDDG & $0.6270$ & $0.6973$ & $0.6445$ & $0.7865$ & $0.6888$\tabularnewline
MLDG & $0.6623$ & $0.6688$ & $0.5896$ & $0.8800$ & $0.7001$\tabularnewline
D-SAM & $0.6387$ & $0.7070$ & $0.6466$ & $0.8555$ & $0.7120$\tabularnewline
DeepAll & $0.6668$ & $0.6941$ & $0.6002$ & $0.8998$ & $0.7152$\tabularnewline
\hline 
JiGen & $0.6855\pm0.004$ & $\pmb{0.6889\ensuremath{\pm}0.002}$ & $\pmb{0.6831\ensuremath{\pm}0.011}$ & $0.8946\pm0.008$ & $0.7380\pm0.002$\tabularnewline
JiGen + adv & $0.6857\pm0.004$ & $0.6837\pm0.003$ & $0.6753\pm0.008$ & $0.8980\pm0.001$ & $0.7357\pm0.003$\tabularnewline
Jigen + PNG & $\pmb{0.6914\ensuremath{\pm}0.005}$ & $\pmb{0.6903\ensuremath{\pm}0.002}$ & $\pmb{0.6855\ensuremath{\pm}0.007}$ & $\pmb{0.9044\ensuremath{\pm}0.003}$ & $\pmb{0.7429\ensuremath{\pm}0.002}$\tabularnewline
\hline 
\multicolumn{6}{c}{ResNet-18}\tabularnewline
\hline 
D-SAM & $0.7733$ & $0.7243$ & $0.7783$ & $0.9530$ & $0.8072$\tabularnewline
DeepAll & $0.7785$ & $0.7486$ & $0.6774$ & $0.9573$ & $0.7905$\tabularnewline
\hline 
JiGen & $0.8009\pm0.004$ & $0.7363\pm0.007$ & $0.7046\pm0.013$ & $\pmb{0.9629\ensuremath{\pm}0.002}$ & $0.8012\pm0.002$\tabularnewline
JiGen + adv & $0.7923\pm0.006$ & $0.7402\pm0.004$ & $0.7188\pm0.005$ & $0.9617\pm0.001$ & $0.8033\pm0.001$\tabularnewline
JiGen + PNG & $\pmb{0.8014\ensuremath{\pm}0.005}$ & $\pmb{0.7538\ensuremath{\pm}0.001}$ & $\pmb{0.7222\ensuremath{\pm}0.006}$ & $\pmb{0.9627\ensuremath{\pm}0.002}$ & $\pmb{0.8100\ensuremath{\pm}0.005}$\tabularnewline
\bottomrule
\end{tabular}
}
\par\end{centering}
\caption{Comparing different algorithms for domain generalization using dataset PACS and two network architectures. The setting is the same to that of Table \ref{tbl: domain_small}.} \label{tbl: dg}
\end{table}

\textbf{Setup} We fix all the experiment setting the same as \citet{carlucci2019domain}. We use the Alexnet and Resnet-18 with multihead pretrained on ImageNet as the multitask network. We evaluate the methods on PACS \citep{Li_2017_ICCV}, which covers 7 object categories and 4 domains (Photo, Art Paintings, Cartoon and Sketches). Same to \citet{carlucci2019domain}, we trained our model considering three domains as source datasets and the remaining one as target. We implement $F_\text{PD}$ that measures the discrepancy of the feature space of the two tasks using the idea of Domain Adversarial Neural Networks \citep{ganin2015unsupervised} by adding an extra prediction head on the shared feature space to predict the whether the input is for the classification task or Jigsaw task. {\color{black}Specifically, we add an extra linear layer on the shared latent feature representations that is trained to predict the task that the latent space belongs to, i.e.,
\[
F_{\text{PD}}(\Phi_{\text{class}}(\th),\Phi_{\text{jig}}(\th))=\min_{w,b}\frac{1}{n}\sum_{i=1}^{n}\log(\sigma(w^{\top}\phi_{\text{class}}(x_{i},\th)))+\log(1-\sigma(w^{\top}\phi_{\text{class}}(x_{i},\th))).
\]
Notice that the optimal weight and bias for the linear layer depends on the model parameter $\th$, during the training, both $w,b$ and $\th$ are jointly updated using stochastic gradient descent. We follow the default training protocol provided by the source code of \citet{carlucci2019domain}. 
}

\textbf{Baselines} Our main baselines are JiGen \citep{carlucci2019domain}; JiGen + adv, which adds an extra domain adversarial loss on JiGen; and our PNG with domain adversarial loss as criterion function. In order to run statistical test for comparing the methods, we run all the main baselines using 3 random trials. We use the released source code by \citet{carlucci2019domain} to obtained the performance of JiGen. For JiGen+adv, we use an extra run to tune the weight for the domain adversarial loss. Besides the main baselines, we also includes TF \citep{Li_2017_ICCV}, CIDDG \citep{li2018deep}, MLDG \citep{li2018learning} , D-SAM \citep{d2018domain} and DeepAll \citep{carlucci2019domain} as baselines with the author reported performance for reference.

\textbf{Result} The result is summarized in Table \ref{tbl: dg} with bolded value indicating the statistical significant best methods with p-value based on matched-pair t-test less than 0.1. Combining Jigen and PNG to dynamically reweight the task weights is able to implicitly regularizes the latent space without adding an actual regularizer which might hurt the performance on the tasks and thus improves the overall result.

\end{document}